\newtheorem{theorem}{Theorem}[section]
\newtheorem{lemma}[theorem]{Lemma}
\newtheorem{corollary}[theorem]{Corollary}
\newtheorem{proposition}[theorem]{Proposition}
\theoremstyle{definition}
\newtheorem{definition}[theorem]{Definition}
\newtheorem{example}[theorem]{Example}
\theoremstyle{remark}
\newtheorem{remark}[theorem]{Remark}
\renewcommand{\r}{\mathfrak{r}}
\newcommand{\C}[0]{\mathbb{C}}
\newcommand{\cal}[1]{\mathcal{#1}}
\newcommand{\Z}[0]{\mathbb{Z}}
\newcommand{\arr}[1]{\overset{#1}{\rightarrow}}
\newcommand{\inner}[2]{\langle #1, #2 \rangle}
\newcommand{\id}[0]{\operatorname{id}}
\newcommand{\FC}[0]{\mathbb{F} C^*}
\renewcommand{\r}{\frk{r}}
\newcommand{\s}{\frk{s}}
\renewcommand{\k}[0]{\mathbb{K}}
\newcommand{\order}[0]{\operatorname{ord}}
\newcommand{\diag}[0]{\operatorname{diag}}
\newcommand{\K}[0]{\mathbb{K}}
\newcommand{\trace}[1]{\operatorname{trace}(#1)}
\newcommand{\scr}[1]{\mathscr{#1}}
\newcommand{\frk}[1]{\mathfrak{#1}}
\begin{document}

\author[Graham et al.]{Joshua Graham $^{a}$, Rishabh Goswami $^{b}$ and Jason Palin $^{c}$}
\address{$^{a}$School of Mathematics and Statistics, University of New South Wales, Sydney, Australia,\\
$^{b}$Department of Mathematics, North-Eastern Hill University, Shillong, India,\\
$^{c}$Department of Mathematics and Statistics, McMaster University, Hamilton, Canada.\\}

\email{joshua.graham@unsw.edu.au, rishabhgoswami.math@gmail.com and palinj@mcmaster.ca}

\title{Leavitt Path Algebras of Quantum Quivers}
\keywords{Quantum quivers, Quantum graphs, Leavitt path algebras}
\subjclass[2020]{16S88, 46L89}

\begin{abstract}
Adapting a recent work of Brannan et al., on extending graph $C^*$-algebras to Quantum graphs, we introduce "Quantum Quivers" as an analogue of quivers where the edge and vertex set has been replaced by a $C^*$-algebra and the maps between the sets by $*$-homomorphisms. Additionally, we develop the theory around these structures and construct a notion of Leavitt path algebra over them and also compute the monoid of finitely generated projective modules over this class of algebras. 

\end{abstract}
\maketitle

\section{Introduction}

Quantum graphs were first introduced in \cite{Weaver} as an application of a noncommutative notion of relations in the setting of von Neumann algebras. Since then they have been applied and studied largely in the context of quantum communication and noncommutative algebras. Several different definitions of quantum graphs have been proposed since their origin \cite{Musto, kuperberg2012neumann, chirvasitu2022random}. In particular they have been defined in terms of a self-adjoint operator on $L^2(B)$, $B$ a finite dimensional $C^*$-algebra, which is idempotent under pointwise multiplication generalizing the usual adjacency matrix of a graph (which contains only 1's and 0's) (\cite[Definition~5.1]{Musto}). This definition was used in \cite{Voigt} to develop a quantum graph version of Cuntz-Krieger $E$-families, allowing also for analogs of Cuntz-Krieger algebras and graph $C^*$-algebras to be defined over quantum graphs.

There is a disconnect between classical directed graphs and quantum graphs stemming from the fact that quantum graphs lack a good notion of many usual objects in graph theory, like paths and cycles. These have received particular importance in recent years in the context of corresponding graph theoretic properties with properties of algebraic structures associated to graphs, such as in the context of Leavitt path algebras (LPAs for short) \cite{AAS} and particularly in the context of graph monoids \cite{ara2007nonstable} and topological groupoids \cite{rigby}. With these connections in mind, there is clear utility for developing usable graph theoretic notions in the quantum graph setting. 

The aim of this paper is to study a slight variation of quantum graph, which we call \emph{quantum quiver} (QQ for short) which comes from applying ideas in noncommutative geometry to discrete graph theory. The construction of quantum quivers is along the lines of topological quivers introduced in \cite{muhly2005topological}, where the vertex and the edge set is replaced by a locally compact Hausdorff space with the range and source maps between the sets by continuous open maps.

When combined with the result that finite-dimensional $C^*$-algebras are products of matrix algebras, we find that the structures buried within QQs are elegant and in many ways behave similarly to directed graphs. Moreover, we present a diagrammatic way to represent such structures that mirrors the way directed graphs can be drawn, though we do lose a small amount of information doing this (see Diagram \ref{qqdiagram2} for instance). Building on the foundations in \cite{Voigt}, we also define quantum Cuntz-Krieger algebras for QQs and from this define a ring with generators and relations that helps to connect the theory of quantum quivers to Leavitt path algebras. We would like to emphasise that these introduced families of operators and associative algebras do not necessarily obey the conventional structural results. However, building upon ideas introduced in \cite{Bergman1974} we are able to explicitly calculate the monoid of finitely generated projective modules over the aforementioned algebras. This gives geometric information on QQs akin to how graph monoids give information about directed graphs. 

In section 2 of this article we introduce the notion of a quantum quiver, an analogue of quantum graphs which has an associated Cuntz-Krieger family with two distinguished linear maps corresponding to vertices and edges. We dwell on the definition of quantum quivers for the rest of the section, examining the notion of isomorphism between them and the question of classification, introducing a diagrammatic tool for visualizing QQs, and defining analogies of the classical graph-theoretic notions of complete and disconnected graphs. In section 3 we accomplish our original goal of associating LPAs to QQs while in section 4 we compute the monoid of finitely generated projective modules over these algebras.

\section{Quantum quivers}
Our aim a priori is to apply define LPAs for quantum graphs using the framework of the quantum Cuntz-Krieger family (see Definition \ref{def_2_5}). However, the quantum Cuntz-Krieger family consist solely of partial isometries, analogous to edges in the classical case, but we also need a distinguished class of elements analogous to vertices in order to have a version of LPAs associated with quantum graph. Thus, we introduce the notion of a ``Quantum Quiver" which has the notion of both edges and vertices.
\subsection{Definition and Basic Properties}

\begin{definition}
    A \textit{quantum quiver} (QQ) is a collection $\scr{B} = (B_0, B_1, \r,\s)$ where $B_0,B_1$ are finite dimensional complex $C^*$-algebras and $\r,\s: B_0 \to B_1$ are unit-preserving $*$-homomorphisms.
\end{definition}

We may turn any discrete directed graph into a quantum quiver in the following way: given two finite sets $E_0,E_1$ with set functions $r,s: E_1 \to E_0$ we have induced algebra homomorphisms $r^*,s^*: C(E_0) \to C(E_1)$ (where $C(E_0)$ denotes the $C^*$-algebra of complex functions in $E_0$). Thus $(C(E_0),C(E_1),r^*,s^*)$ is a quantum quiver with the constituent $C^*$-algebras commutative. This assignment is actually a bijection by way of the following.

\begin{lemma}
    Suppose $\scr{B} = (B_0,B_1,\r,\s)$ is a QQ with $B_0,B_1$ commutative. Then there is a directed graph $(E_0,E_1,r,s)$ such that $(B_0,B_1,\r,\s)$ is isomorphic to $(C(E_0),C(E_1),r^*,s^*)$.
\end{lemma}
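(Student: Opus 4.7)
The plan is to exploit finite Gelfand duality, which identifies the category of finite sets and functions with the opposite of the category of finite-dimensional commutative $C^*$-algebras and unital $*$-homomorphisms. First, I would invoke the structure theorem for finite-dimensional commutative $C^*$-algebras: every such algebra is $*$-isomorphic to $\C^n \cong C(\{1,\dots,n\})$ for $n$ its complex dimension (this follows by simultaneously diagonalising a commuting family of self-adjoint generators, or as a special case of Artin--Wedderburn). Fixing $*$-isomorphisms $\phi_0\colon B_0 \to C(E_0)$ and $\phi_1\colon B_1 \to C(E_1)$ for finite sets $E_0,E_1$, the compositions $\tilde\r := \phi_1 \r \phi_0^{-1}$ and $\tilde\s := \phi_1 \s \phi_0^{-1}$ are unit-preserving $*$-homomorphisms $C(E_0) \to C(E_1)$.

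Next, I would extract set maps $r,s \colon E_1 \to E_0$ so that $\tilde\r = r^*$ and $\tilde\s = s^*$. The cleanest route is via characters: for each $e \in E_1$ the evaluation $\mathrm{ev}_e$ is a unital character of $C(E_1)$, so $\mathrm{ev}_e \circ \tilde\r$ is a unital character of $C(E_0)$, and every unital character of $C(E_0)$ has the form $\mathrm{ev}_v$ for a unique $v \in E_0$. Setting $r(e) := v$, and analogously for $s$, produces the required maps; the unit-preservation hypothesis is exactly what forces $\mathrm{ev}_e \circ \tilde\r$ to be nonzero and hence an honest character, so that $r$ is defined on all of $E_1$. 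Equivalently, one can observe that $\tilde\r$ sends the minimal orthogonal projections $\{\mathbf{1}_{\{v\}}\}_{v \in E_0}$ to a family of orthogonal projections in $C(E_1)$ summing to $1$, which therefore partitions $E_1$ and determines $r$.

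Finally, to verify $\tilde\r = r^*$ I would check agreement on the basis $\{\mathbf{1}_{\{v\}}\}_{v \in E_0}$ of $C(E_0)$: by construction both sides send $\mathbf{1}_{\{v\}}$ to $\mathbf{1}_{r^{-1}(v)}$. Combined with the analogous verification for $\s$, the pair $(\phi_0,\phi_1)$ then furnishes the desired isomorphism of quantum quivers between $\scr{B}$ and $(C(E_0),C(E_1),r^*,s^*)$.

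There is no serious obstacle; the content is essentially the finite case of Gelfand--Naimark transported across the two parallel arrows of the quiver. The only mild care-point is to invoke unit-preservation of $\r$ and $\s$ at the right moment, since without it a $*$-homomorphism between commutative $C^*$-algebras corresponds only to a partial map between the underlying sets rather than a total function.
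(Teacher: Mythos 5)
Your proposal is correct and follows essentially the same route as the paper: both identify $B_0,B_1$ with $C(E_0),C(E_1)$ via the finite case of Gelfand duality and then observe that unital $*$-homomorphisms between such algebras are pullbacks along set maps $E_1\to E_0$. Your version merely makes explicit (via characters, or equivalently the images of the minimal projections) the step the paper leaves implicit.
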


\begin{proof}
    It is well-known that there is a one-to-one correspondence between locally compact Hausdorff spaces to commutative $C^*$-algebras via sending a space $M$ to the algebra of complex continuous functions on $M$. This restricts to a one-to-one correspondence from finite discrete spaces to finite dimensional commutative $C^*$-algebras (i.e., complex algebras of the form $\C^n$ for some integer $n$). Assume without loss of generality that $B_i$ is given by $\C^{n_i}$ for some integers $n_i$, $i = 0,1$. Hence we may further assume that for each $i$, $B_i \cong C(E_i)$ for some finite sets $E_i$ of size $n_i$. Since $\r,\s$ are unit-preserving $\C$-algebra homomorphisms, it follows in the framework we've said above that both $\r,\s$ must be induced by set-theoretic maps $r,s: E_1 \to E_0$ (respectively). It follows by definition that $(E_0, E_1,r,s)$ is a directed graph and that $(B_0,B_1,\r,\s)$ is isomorphic $(C(E_0), C(E_1),r,s)$.
\end{proof}

It is also helpful for what follows to have a notion of morphisms for quantum quivers. The categorically-minded reader will find the following definition obvious.

\begin{definition}
    A \emph{morphism of QQs} $\scr{B} = (B_0,B_1,\r_B, \s_B) \arr{\Phi} \scr{C} = (C_0, C_1, \r_C, \s_C)$ is a pair of $*$-homomorphisms $\Phi_i: B_i \to C_i$ for $i=0,1$, such that the squares
    \begin{equation}
        \begin{tikzcd}
            B_0 \arrow[r, "\Phi_0"] \arrow[d, "\r_B"] & C_0 \arrow[d, "\r_C"] \\
            B_1 \arrow[r, "\Phi_1"] & C_1
        \end{tikzcd}, \begin{tikzcd}
            B_0 \arrow[r, "\Phi_0"] \arrow[d, "\s_B"] & C_0 \arrow[d, "\s_C"] \\
            B_1 \arrow[r, "\Phi_1"] & C_1
        \end{tikzcd}
    \end{equation}
    commute. A morphism $\Phi$ of the above form is said to be an \emph{isomorphism} if $\Phi_0,\Phi_1$ are $*$-isomorphisms. In which case we say $\scr{B}, \scr{C}$ are isomorphic.
\end{definition}

It is well known that a finite dimensional $C^*$-algebra can be written as a direct product of matrix algebras over $\C$ (see \cite{Murphy1990}, Theorem 6.3.8). We will hence tacitly assume going forward $B_0, B_1$ have decompositions
$$\prod_{v \in I_0} M_{n_v}(\C),$$
and
$$\prod_{\alpha \in I_1} M_{m_\alpha}(\C),$$
for some finite indexing sets $I_0, I_1$, and integers $n_v, m_\alpha$ for each $v \in I_0, \alpha \in I_1$.

\begin{definition}
    Call a QQ $\scr{B} = (B_0, B_1, \r, \s)$ \textit{complete} if $\r^* \s: B_0 \to B_0$ sends $1_v \in M_{n_v}(\C) \subset B_0$ to $1 \in B_0$. Call $\scr{B}$ \textit{disconnected} if $B_1 = 0$.
\end{definition}

For any number of vertices, one can always construct a set of edges between these vertices such that the resulting quiver is complete (recall a quiver $(E_0, E_1, r,s)$ is complete if for all $x,y \in E_0$, there is precisely one $e \in E_1$ such that $r(e) = x, s(e) = y$). Moreover, this complete quiver is unique up to isomorphism. We hence ask the two following questions;
\begin{enumerate}
    \item \textit{Given a finite dimensional $C^*$-algebra $B_0$, is it possible to construct another finite dimensional $C^*$-algebra $B_1$ and unital $*$-homomorphisms $\r,\s: B_0 \to B_1$ such that $\scr{B} = (B_0, B_1, \r,\s)$ is complete?}
    \item \textit{Given complete QQs $\scr{C} = (C_0, C_1, \r_C, \s_C)$ and $\scr{B} = (B_0,  B_1, \r_B, \s_B$) such that $B_0 \cong C_0$, is it true that $B_1 \cong C_1$?}
\end{enumerate}

Unfortunately, the answer to both questions is no. The following lemma illustrates the reasoning behind our first answer: completeness of $\scr{B}$ puts constraints on the structure of $B_0$. The proof of the following references a theorem that will be proved later.

\begin{lemma} \label{lem_3_8}
    Let $\scr{B} = (B_0, B_1, \r,\s)$ be a complete QQ and let $v \in I_0$ be an index of one of the matrix algebra summands of $B_0$ (with corresponding matrix algebra $M_{n_v}(\C)$). Then $\sum_{w \in I_0} n_w$ is a multiple of $n_v$.
\end{lemma}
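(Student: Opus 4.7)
The plan is to compute $\tau_{B_0}(\r^*\s(1_v))$ in two different ways: once using completeness directly, and once by pushing the adjoint across to a trace on $B_1$ where the structure of $*$-homomorphisms between matrix algebras forces divisibility by $n_v$. The divisibility claim will then drop out by equating the two expressions.

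First, I would equip $B_0$ and $B_1$ with the canonical unnormalised traces $\tau_{B_0},\tau_{B_1}$ arising as the sum of the usual matrix traces on each simple summand, together with the associated Hilbert--Schmidt inner products $\langle x,y\rangle = \tau(x^*y)$; with respect to these, $\r^*:B_1\to B_0$ is the Hilbert-space adjoint of the unit-preserving $*$-homomorphism $\r$. Completeness of $\scr{B}$ says $\r^*\s(1_v)=1_{B_0}$, so applying $\tau_{B_0}$ to both sides gives $\tau_{B_0}(\r^*\s(1_v))=\sum_{w\in I_0}n_w$. On the other hand, since $\r(1_{B_0})=1_{B_1}$, for every $y\in B_1$ we have
\[
    \tau_{B_0}(\r^*y)=\langle 1_{B_0},\r^*y\rangle_{B_0}=\langle \r(1_{B_0}),y\rangle_{B_1}=\tau_{B_1}(y),
\]
and taking $y=\s(1_v)$ yields $\tau_{B_1}(\s(1_v))=\sum_{w\in I_0}n_w$.

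Next, I would observe that $\tau_{B_1}(\s(1_v))$ is automatically divisible by $n_v$. The restriction of $\s$ to the simple summand $M_{n_v}(\C)\subset B_0$ followed by projection onto each matrix block $M_{m_\alpha}(\C)\subset B_1$ gives a (possibly non-unital) $*$-homomorphism $\s_\alpha:M_{n_v}(\C)\to M_{m_\alpha}(\C)$; by the standard structure theorem for such maps, up to unitary conjugation $\s_\alpha(A)=\diag(A,\ldots,A,0,\ldots,0)$ with some non-negative multiplicity $k_\alpha$, so $\s_\alpha(1_v)$ is a projection of rank $k_\alpha n_v$. Summing over $\alpha$ gives $\tau_{B_1}(\s(1_v))=n_v\sum_\alpha k_\alpha$, and comparing with the expression from the first step gives $n_v\mid \sum_{w\in I_0}n_w$, which is the claim.

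The main obstacle I anticipate is matching the inner-product convention underlying the paper's definition of $\r^*$. In the quantum-graph literature one often uses a weighted $\delta$-form in place of the unnormalised matrix trace, in which case the two displays above pick up scalar weights attached to each summand. The two-step strategy still applies in that setting, but one has to track these weights summand by summand; the divisibility is ultimately driven by the purely geometric fact that $\operatorname{rank}(\s_\alpha(1_v))$ is always a multiple of $n_v$, and this persists for any faithful tracial weight, so the conclusion should remain robust under such a change in convention.
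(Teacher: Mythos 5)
Your proof is correct and follows essentially the same route as the paper's: both arguments combine the structure theorem for $*$-homomorphisms of matrix algebras (so that $\s(1_v)$ is a projection whose rank, i.e.\ trace, is a multiple of $n_v$) with the adjoint identity $\operatorname{trace}(\s(1_v)\,\r(a)) = \operatorname{trace}(a)$ evaluated at $a = 1_{B_0}$. Your worry about the inner-product convention is moot here, since the paper explicitly fixes the unnormalised trace inner product $\langle x,y\rangle = \operatorname{trace}(x^*y)$ when defining adjoints.
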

\begin{proof}
    By Theorem \ref{skolem_noether_quantum}, the image of $1_v \in B_0$ under $\s$ will be a diagonal matrix $D$ of 1's and 0's. Moreover, the number of 1's along the diagonal of $D$ will be a multiple of $n_v$. By definition of the adjoint, we have $\trace{D \r(a)} = \trace{a}$ since $\r^*(D) = 1$ for all $a \in B_0$. In particular for $a = 1_{B_0}$, we have $\trace{D} = \trace{1_{B_0}} = \sum_{w \in I_0}n_w$. The result now follows.
\end{proof}

It is currently unclear if $B_0$ satisfying Lemma \ref{lem_3_8} is sufficient to construct a complete QQ with it, although we suspect the answer is yes.

Concerning the second question, we consider the case where $B_0 = \C^2$. We can construct two complete QQs from $B_0$ which are not mutually isomorphic, as illustrated below. Let $B_1 = \C^4$, with $\r: B_0 \to B_1: (\lambda_1, \lambda_2) \mapsto (\lambda_1, \lambda_2, \lambda_1, \lambda_2)$ and $\s: B_0 \to B_1: (\lambda_1, \lambda_2) \mapsto (\lambda_1, \lambda_1, \lambda_2, \lambda_2)$. It straightforward to verify that $(\C^2, \C^4, \r,\s)$ is complete. Consider then, the diagonal embedding $\iota: \C^4 \mapsto M_4(\C)$. It can also be checked that $(\C^2, M_4(\C), \iota \r, \iota \s)$ is complete.

\vspace{3mm}

For a finite dimensional $C^*$-algebra $B$ equipped with a faithful state $\psi:B \to \C$, we have an induced Hilbert space structure on $B$ with inner product $\inner x y := \psi(x^*y)$. We denote this Hilbert space by $L^2(B) := L^2(B, \psi)$. With respect to this structure multiplication (seen as a bounded linear operator) $m: L^2(B \otimes B) \cong L^2(B) \otimes L^2(B) \to L^2(B): x \otimes y \mapsto xy$ has a Hilbert space adjoint which we'll call \textit{comultiplication} $m^*: L^2(B) \to L^2(B \otimes B)$. We call $\psi$ a \textit{$\delta$-form} if $mm^* = \delta^2 \id_B$ for some $\delta > 0$.

The following two definitions are from \cite{Voigt}, and Definition \ref{def_2_5} is original to that article.

\begin{definition}
    A \textit{quantum graph} is a collection $\scr{G} = (B,\psi,A)$ where $B$ is a finite dimensional $C^*$-algebra, $\psi: B \to \C$ is a $\delta$-form, and $A: L^2(B) \to L^2(B)$ is a linear operator such that $m(A \otimes A)m^* = \delta^2 A$.
\end{definition}

\begin{definition} \label{def_2_5}
    Given a quantum graph $\scr{G} = \{B,\psi,A\}$, a \textit{quantum Cuntz-Krieger $\scr{G}$-family} is a pair $(s,D)$ where $D$ is a $C^*$-algebra and $s:B \to D$ is a linear map (not necessarily a $*$-homomorphism) such that
    \begin{enumerate}
        \item$\mu(s^* \otimes s)m^* = \mu(s \otimes s^*)m^* A$,
        \item $\mu(1 \otimes \mu)(s \otimes s^* \otimes s)(1 \otimes m^*)m^* = s$,
    \end{enumerate}
    where $\mu: D \otimes D \to D$ denotes multiplication and $s^*$ denotes the linear map $b \mapsto s(b^*)^*$. Define the \textit{free quantum graph $C^*$-algebra} $\FC(\scr{G})$ to be the universal $C^*$-algebra with respect to these quantum Cuntz-Krieger $\scr{G}$-families.
\end{definition}

\begin{lemma} \label{lem_3_2}
    Let $(B_0,B_1,\r,\s)$ be a QQ. For a fixed $\delta > 0$, suppose further that $\psi_0, \psi_1$ are $\delta$-forms on $B_0, B_1$ respectively. Let $m_1: B_1 \otimes B_1 \to B_1$ denote multiplication and $m_1^*: B_1 \to B_1 \otimes B_1$ denote the adjoint with respect to the Hilbert space structure on $B_1$ induced by $\psi_1$. Similarly, let $\r^*: B_1 \to B_0$ denote the adjoint of $\r$. Then
    $$m_1(\s \r^* \otimes \s \r^*)m_1^* = \delta^2 \s \r^*.$$
    In particular, $\scr{G} = (B_1, \psi_1, \s \r^*)$ is a quantum graph.
\end{lemma}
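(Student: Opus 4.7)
The plan is to compute $m_1(\s\r^* \otimes \s\r^*)m_1^*$ directly, exploiting two structural facts: first, that $\r$ and $\s$ are unital $*$-homomorphisms, so they intertwine multiplication, and second, that $\psi_0$ is a $\delta$-form, giving $m_0 m_0^* = \delta^2 \id_{B_0}$ (where $m_0,m_0^*$ are multiplication and comultiplication on $B_0$ with respect to $\psi_0$). The only input from quantum graph theory is thus the defining relation $mm^* = \delta^2 \id$ for a $\delta$-form, transported between $B_0$ and $B_1$ along the homomorphisms.

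First I would factor the tensor: $\s\r^* \otimes \s\r^* = (\s \otimes \s)(\r^* \otimes \r^*)$, so that
$$m_1(\s\r^* \otimes \s\r^*)m_1^* = m_1(\s \otimes \s)(\r^* \otimes \r^*)m_1^*.$$
Next, since $\s: B_0 \to B_1$ is a $*$-homomorphism it satisfies $m_1(\s \otimes \s) = \s m_0$. Similarly $\r: B_0 \to B_1$ being multiplicative gives $m_1(\r \otimes \r) = \r m_0$; taking Hilbert space adjoints of both sides (using the standard fact that $(\r \otimes \r)^* = \r^* \otimes \r^*$ with respect to the tensor inner products induced by $\psi_0, \psi_1$) yields the dual identity $(\r^* \otimes \r^*)m_1^* = m_0^* \r^*$. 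Substituting both relations gives
$$m_1(\s \otimes \s)(\r^* \otimes \r^*)m_1^* \;=\; \s m_0 m_0^* \r^*.$$

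Finally I apply the $\delta$-form property of $\psi_0$, namely $m_0 m_0^* = \delta^2 \id_{B_0}$, to collapse the middle two factors and obtain $\delta^2 \s \r^*$, as required. The quantum graph claim $\scr{G} = (B_1,\psi_1,\s\r^*)$ then follows since $\s\r^*$ is a linear operator $L^2(B_1) \to L^2(B_1)$ by construction. The only mildly subtle point is the adjoint-of-tensor identification in the second step; once that is in place, everything is a one-line chain of substitutions, so I do not anticipate a serious obstacle.
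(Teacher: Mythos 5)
Your proposal is correct and follows essentially the same route as the paper: factor the tensor, use $m_1(\s\otimes\s)=\s m_0$, dualize $m_1(\r\otimes\r)=\r m_0$ to get $(\r^*\otimes\r^*)m_1^*=m_0^*\r^*$, and finish with $m_0 m_0^*=\delta^2\id_{B_0}$. The only difference is that you spell out the adjoint-of-tensor step that the paper summarizes as ``$\r^*$ preserving comultiplication.''
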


\begin{proof}
    This follows from $\s$ preserving multiplication (i.e., $\s m_0 = m_1(\s \otimes \s)$) and likewise $\r^*$ preserving comultiplication;
    \begin{align*}
        m_1(\s \r^* \otimes \s \r^*)m_1^* & = m_1(\s \otimes \s)(\r^* \otimes \r^*)m_1^* \\
        & = \s m_0 m_0^* \r^* \\
        & = \delta^2 \s \r^*.
    \end{align*}
    Recall that $m_0 m_0^* = \delta^2 \id$.
\end{proof}

It is natural to ask if a classical quantum graph $(B, \psi, A)$ can be extended to a QQ in our sense. In general the answer is no; even in the commutative setting it is easy to find counterexamples. It is unclear at present what further conditions $A$ should satisfy to permit the existence of such an extension.

That being said we will no longer require $B_0, B_1$ come equipped with $\delta$-forms, with these forms playing no further roles in our constructions or results and we will focus entirely on QQs. The adjoint maps are defined are defined with respect to the trace inner-product $\inner{x}{y} = \operatorname{trace}(x^*y)$ on finite dimensional $C^*$-algebras. Recall that finite dimensional $C^*$-algebras are direct sums of matrix algebras with $*$-operation being conjugate transpose.

We'll introduce a modification of free quantum Cuntz-Krieger algebras for QQs. The following definition acts as a midway definition between free quantum Cuntz-Krieger algebras for quantum graphs and graph $C^*$-algebras for directed graphs. 

\begin{definition} \label{defn_free_quantum_B_family}
    Let $\scr{B} = (B_0, B_1,\r,\s)$ be a QQ. A \textit{free quantum Cuntz-Krieger} $\scr{B}$-family is a collection $(\phi_0, \phi_1, D)$ consisting of a $C^*$-algebra $D$ and linear maps $\phi_i:B_i \to D$, $i=0,1$ such that
    \begin{enumerate}
        \item $\phi_0 = \phi_0^* = \mu(\phi_0 \otimes \phi_0)m_0^*$ (recall $\phi_i^*: b \mapsto \phi_i(b^*)^*$),
        \item $\mu(\phi_0 \s^* \otimes \phi_1) m_1^* = \phi_1 = \mu(\phi_1 \otimes \phi_0 \r^*) m_1^*$,
        \item $\mu(\phi_1^* \otimes \phi_1)m_1^* = \phi_0 \r^*$, and
        \item $\mu(\phi_1 \otimes \phi_1^*)m_1^*\s = \phi_0$,
    \end{enumerate}
    where $\mu: D \otimes D \to D$ denotes multiplication. The \textit{free quantum graph $C^*$-algebra} $\FC(\scr{B})$ is the universal $C^*$-algebra generated with respect to all quantum Cuntz-Krieger $\scr{B}$-families.
\end{definition}
The above definition immediately implies the following remark,
\begin{remark}
   $\scr{B}$ yields the same notion as the free Cuntz-Krieger family and free graph $C^*$-algebra.
\end{remark}

\begin{lemma}
    Suppose $\scr{B} = (B_0, B_1, \r,\s)$ is a QQ, and that $B_0, B_1$ are both equipped with $\delta$-forms $\psi_0, \psi_1$ (respectively) so that $\scr{G} = (B_1, \psi_1, \s\r^*)$ is a quantum graph. Let $(\phi_0, \phi_1, D)$ be a free quantum Cuntz-Krieger $\scr{B}$-family. Then $(\phi_1, D)$ is a quantum Cuntz-Krieger $\scr{G}$-family.
\end{lemma}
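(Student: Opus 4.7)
The plan is to directly verify the two relations of Definition~\ref{def_2_5} for $(\phi_1, D)$, taking the quantum graph operator to be $A = \s\r^*$ and the (co)multiplication to be $m_1,\,m_1^*$ on $B_1$. Both verifications should follow by purely formal manipulation of the four relations in Definition~\ref{defn_free_quantum_B_family}, together with associativity of multiplication on $D$.

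For the first condition, $\mu(\phi_1^*\otimes\phi_1)m_1^* = \mu(\phi_1\otimes\phi_1^*)m_1^*\s\r^*$, I would rewrite the left-hand side as $\phi_0\r^*$ using relation~(3) of Definition~\ref{defn_free_quantum_B_family}. For the right-hand side I would post-compose relation~(4) with $\r^*$ on the right to obtain $\mu(\phi_1\otimes\phi_1^*)m_1^*\s\r^* = \phi_0\r^*$. The two sides then agree.

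For the second condition, $\mu(1\otimes\mu)(\phi_1\otimes\phi_1^*\otimes\phi_1)(1\otimes m_1^*)m_1^* = \phi_1$, I would first use associativity of $\mu$ on $D$ together with naturality of the tensor product to regroup the left-hand side as $\mu\bigl(\phi_1\otimes\mu(\phi_1^*\otimes\phi_1)m_1^*\bigr)m_1^*$. Relation~(3) then replaces the bracketed factor by $\phi_0\r^*$, producing $\mu(\phi_1\otimes\phi_0\r^*)m_1^*$, which relation~(2) identifies with $\phi_1$.

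I do not anticipate any serious obstacle: the whole argument is a short, formal diagram chase using only relations (1)--(4) of Definition~\ref{defn_free_quantum_B_family}. The only step that deserves a moment of care is the regrouping in the second condition, which one confirms by unwinding on an elementary tensor $b_1\otimes b_2$ and observing that $(xy)z=x(yz)$ for the resulting images in $D$; in particular, neither Lemma~\ref{lem_3_2} nor any property of the $\delta$-forms beyond their existence is actually needed, and the identity $\mu(\phi_0\otimes\phi_0)m_0^*=\phi_0$ from relation~(1) plays no role.
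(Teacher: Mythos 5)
Your proof is correct and follows essentially the same route as the paper: condition (1) of Definition~\ref{def_2_5} via relation (3) on the left and relation (4) post-composed with $\r^*$ on the right, and condition (2) by regrouping with associativity and then applying relations (3) and (2) in turn. The only (immaterial) difference is the order in which the two conditions are checked.
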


\begin{proof}
    We first verify that condition (2) of Definition \ref{def_2_5} is satisfied. Following the definition, we have
    \begin{align*}
        \mu_1(1 \otimes \mu_1)(\phi_1 \otimes \phi_1^* \otimes \phi_1)(1 \otimes m_1^*)m_1^* & = \mu(\phi_1 \otimes \mu(\phi_1^* \otimes \phi_1)m_1^*)m_1^* \\
        & = \mu(\phi_1 \otimes \phi_0 \r^*)m_1^* \\
        & = \phi_1.
    \end{align*}
    To see that condition (1) is satisfied, we have
    \begin{align*}
        \mu(\phi_1^* \otimes \phi_1)m_1^* & = \phi_0 \r^* \\
        & = \mu(\phi_1 \otimes \phi_1^*)m_1^* \s \r^*,
    \end{align*}
    by (3) and (4) of Definition \ref{defn_free_quantum_B_family}.
\end{proof}

\subsection{Classification of Quantum Quivers}
Before giving a classification result on QQs, we briefly turn to establishing the notation we will use for representing $*$-homomorphisms of finite dimensional $C^*$-algebras and introducing several important results. Recall that for a finite dimensional (complex) $C^*$-algebra $B$, it can be written as
$$\prod_{k=1}^A M_{n_k}(\C)$$
with term-wise addition and multiplication, and $*$-operation given by Hermitian adjoint. Clearly as a complex vector space, $B$ has basis $\{e_{ij}^k: 1 \leq k \leq A, 1 \leq i,j \leq n_k\}$ where $e_{ij}^k \in M_{n_k}(\C) \subset B$ is the matrix with a 1 in its $(ij)$-th entry and 0's everywhere else. Multiplication is defined by $e_{ij}^k e_{uv}^w = \delta_{kw} \delta_{ju} e_{iv}^k$ (where $\delta$ here denotes the Kronecker Delta) and the $*$-operation is defined by $(e_{ij}^k)^* = e_{ji}^k$. The following is then readily observed.

\begin{lemma}
    As we've defined it above, $B$ can be given the structure of a finite dimensional Hilbert space with inner product given by $\inner{x}{y} := \operatorname{trace}(x^*y)$. With respect to this inner product, the $e_{ij}^k$ form an orthonormal basis of $B$.
\end{lemma}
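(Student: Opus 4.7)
The plan is a direct verification, since the lemma is essentially a structural assertion that the well-known trace inner product on a single matrix algebra extends component-wise to a product of matrix algebras, and that the matrix unit basis is orthonormal with respect to it. First I would make explicit the trace on $B = \prod_{k=1}^{A} M_{n_k}(\C)$: for $x = (x_1, \dots, x_A) \in B$ set $\operatorname{trace}(x) := \sum_{k=1}^{A} \operatorname{trace}(x_k)$, where each inner trace is the usual trace on $M_{n_k}(\C)$. This is $\C$-linear in $x$.

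Next I would check that $\inner{x}{y} := \operatorname{trace}(x^* y)$ satisfies the three axioms of a (complex) inner product. Sesquilinearity is immediate from linearity of trace together with the fact that $*$ is a conjugate-linear involution. Conjugate symmetry follows from the identity $\operatorname{trace}(z^*) = \overline{\operatorname{trace}(z)}$ applied to $z = x^* y$, giving $\inner{y}{x} = \operatorname{trace}(y^* x) = \operatorname{trace}((x^* y)^*) = \overline{\inner{x}{y}}$. For positive-definiteness, write $x = (x_1, \dots, x_A)$; then
\[
\inner{x}{x} = \sum_{k=1}^{A} \operatorname{trace}(x_k^* x_k) = \sum_{k=1}^{A} \sum_{i,j} |(x_k)_{ij}|^2 \geq 0,
\]
with equality if and only if every entry of every block vanishes, i.e.\ $x = 0$. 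Hence $\inner{\cdot}{\cdot}$ is an inner product and $B$ becomes a finite-dimensional Hilbert space.

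For orthonormality, I would compute $\inner{e_{ij}^k}{e_{uv}^w}$ directly using the multiplication and $*$-rules stated in the paragraph above: $(e_{ij}^k)^* e_{uv}^w = e_{ji}^k e_{uv}^w = \delta_{kw}\delta_{iu} e_{jv}^k$, and then $\operatorname{trace}(e_{jv}^k) = \delta_{jv}$. Hence $\inner{e_{ij}^k}{e_{uv}^w} = \delta_{kw}\delta_{iu}\delta_{jv}$, which is $1$ when the triples of indices agree and $0$ otherwise. Since the collection $\{e_{ij}^k\}$ was already noted to be a $\C$-linear basis of $B$, an orthonormal linearly independent spanning set, the lemma follows. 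There is no real obstacle here: the only thing one must be careful about is the bookkeeping of the three indices $(k,i,j)$ and confirming that the trace on a product of matrix algebras behaves additively across the factors, both of which are transparent from the explicit presentation of $B$.
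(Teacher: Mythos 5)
Your verification is correct and is exactly the routine check the paper leaves implicit (the paper states this lemma with no proof, remarking only that it is ``readily observed''). The computation $\inner{e_{ij}^k}{e_{uv}^w} = \delta_{kw}\delta_{iu}\delta_{jv}$ and the block-wise positive-definiteness argument are both right, so nothing is missing.
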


We briefly make some observations about $*$-homomorphisms of $C^*$-algebras, beginning our discussion with the following well-known fact.

\begin{lemma}
    Let $\K$ be a field. Suppose there is a (not necessarily unit preserving) ring homomorphism $\varphi: M_n(\K) \to M_m(\K)$. Then $\varphi$ is either the zero map or is injective. In particular, $\varphi$ is the zero map if $m < n$.
\end{lemma}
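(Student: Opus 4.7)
The plan is to exploit the fact that $M_n(\K)$ is a simple ring and then count pairwise orthogonal idempotents. First I would recall (or quickly establish) that $M_n(\K)$ has no two-sided ideals other than $0$ and itself: given a nonzero $x = \sum x_{ij} e_{ij}$ in a two-sided ideal $I$, pick indices $p,q$ with $x_{pq} \neq 0$; then $x_{pq}^{-1} e_{ip} x e_{qj} = e_{ij} \in I$ for all $i,j$, so $I$ contains every matrix unit and hence equals $M_n(\K)$.

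Next I would apply this simplicity to the kernel. Since $\ker \varphi$ is a two-sided ideal of $M_n(\K)$, we immediately get $\ker \varphi = 0$ or $\ker \varphi = M_n(\K)$, i.e.\ $\varphi$ is either injective or the zero map. This disposes of the first assertion.

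For the ``in particular'' clause, assume $\varphi$ is nonzero, hence injective, and consider the diagonal matrix units $e_{11}, \ldots, e_{nn} \in M_n(\K)$, which form a family of $n$ nonzero, pairwise orthogonal idempotents summing to $1_{M_n(\K)}$. Their images $\varphi(e_{11}), \ldots, \varphi(e_{nn})$ remain pairwise orthogonal idempotents in $M_m(\K)$, and by injectivity they are all nonzero. Since nonzero idempotents in $M_m(\K)$ have rank at least $1$ and the rank of a sum of pairwise orthogonal idempotents is the sum of their ranks (being bounded by $m$), we deduce $n \leq m$. Contrapositively, if $m < n$ then $\varphi$ must be zero.

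The only subtle point — and really the main thing to be careful about — is that $\varphi$ is not assumed to be unit-preserving or $\K$-linear, so one cannot simply argue by sending $1_{M_n(\K)} \mapsto 1_{M_m(\K)}$ and comparing $\K$-dimensions. The orthogonal-idempotent counting argument sidesteps this issue since it uses only the ring structure, and the rank inequality for orthogonal idempotents holds in any $M_m(\K)$ regardless of where their sum lands.
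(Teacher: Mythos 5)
Your proof is correct and rests on the same key fact as the paper's (one-line) argument: simplicity of $M_n(\K)$ forces $\ker\varphi$ to be $0$ or everything. The orthogonal-idempotent count for the ``in particular'' clause is a detail the paper leaves implicit, and your observation that it avoids any appeal to $\K$-linearity or unitality of $\varphi$ is a worthwhile refinement rather than a departure.
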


\begin{proof}
    This follows since $M_n(\k)$ is simple.
\end{proof}

The following result is a modification of the Skolem-Noether theorem, which will be critical in our classification theorem of quantum quivers.

\begin{theorem} \label{skolem_noether_quantum}
    Let $\k$ be a field. Suppose $A = A_1 \times A_2 \times ... \times A_n$ is a product of simple $\k$-algebras and suppose $f,g :A \to M_m(\k)$ are (unital) ring homomorphisms such that for each $i$, we have $\dim_k(A_i \otimes_A V_f) = \dim_k(A_i \otimes_A V_g)$ where $V_f, V_g$ denote $\k^m$ with the $A$-module structure induced by $f,g$ respectively. Then there is some $G \in GL_m(\k)$ such that for all $a \in A$, $f(a) = G^{-1} g(a) G$.
\end{theorem}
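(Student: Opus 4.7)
The plan is to interpret $V_f$ and $V_g$ as finite dimensional $A$-modules, use the hypothesis to show they are isomorphic as $A$-modules, and then read off the conjugating matrix $G$ from this isomorphism.

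First, I would exploit the block structure of $A$ via the central idempotents $e_i \in A$ that project onto the factor $A_i$. Because $f$ and $g$ are unital ring homomorphisms, $\{f(e_i)\}_i$ and $\{g(e_i)\}_i$ are each families of pairwise orthogonal idempotents in $M_m(\k)$ summing to the identity, giving decompositions $V_f = \bigoplus_i f(e_i) V_f$ and $V_g = \bigoplus_i g(e_i) V_g$ that respect the $A$-module structures. A direct calculation identifies $A_i \otimes_A V_f$ with $f(e_i) V_f$ as $\k$-vector spaces (using that $A_i \cdot A_j = 0$ for $i \neq j$ in the product), and similarly for $g$. The hypothesis therefore translates into the equality $\dim_\k f(e_i) V_f = \dim_\k g(e_i) V_g$ for every $i$.

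Next, each summand $f(e_i) V_f$ is a finite dimensional module over the simple $\k$-algebra $A_i$. By Artin-Wedderburn, $A_i$ has a unique simple module $S_i$ up to isomorphism, and every finite dimensional $A_i$-module is a direct sum of copies of $S_i$, hence is determined up to isomorphism by its $\k$-dimension. Combined with the previous paragraph, this produces $A_i$-module isomorphisms $\varphi_i : f(e_i) V_f \to g(e_i) V_g$ for every $i$, and assembling them yields an $A$-module isomorphism $\varphi : V_f \to V_g$.

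Since $V_f$ and $V_g$ are both the underlying vector space $\k^m$, the isomorphism $\varphi$ is represented by an invertible matrix $G \in GL_m(\k)$. The $A$-linearity of $\varphi$ reads $G f(a) v = g(a) G v$ for all $a \in A$ and $v \in \k^m$, which is exactly the desired identity $f(a) = G^{-1} g(a) G$. The main obstacle I anticipate is essentially bookkeeping: verifying the canonical identification $A_i \otimes_A V_f \cong f(e_i) V_f$ carefully, and making sure the classification of finite dimensional modules over a simple $\k$-algebra applies. This requires the $A_i$ to be finite dimensional (equivalently Artinian), which is harmless in the paper's intended application where each $A_i$ is a matrix algebra $M_{n_i}(\C)$.
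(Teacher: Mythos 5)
Your proposal is correct and follows essentially the same route as the paper's proof: decompose $V_f$ and $V_g$ into their components over the simple factors $A_i$, use that a finite-dimensional module over a simple (Artinian) algebra is determined up to isomorphism by its $\k$-dimension, assemble the resulting $A_i$-module isomorphisms into an $A$-module isomorphism $V_f \to V_g$, and read off $G$ as the matrix of that isomorphism. Your explicit identification $A_i \otimes_A V_f \cong f(e_i)V_f$ via the central idempotents, and your remark that the $A_i$ must be Artinian for the classification of their modules to apply, are slightly more careful than the paper's wording but do not constitute a different argument.
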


\begin{proof}
    For each $i$, for shorthand we denote $V_{f,i} := A_i \otimes_A V_f$ and $V_{g,i} := A_i \otimes_A V_g$. We can decompose each of these finite-dimensional spaces into their irreducible $A_i$-submodules, which must be isomorphic since $A_i$ is simple. Thus, since $\dim_k(V_{f,i}) = \dim_k(V_{g,i})$, they must be isomorphic as $A_i$-modules. Let such an isomorphism be denoted $\varphi_i: V_{f,i} \to V_{g,i}$. We may treat $V_{f,i}, V_{g,i}$ as $A$-modules via the projection $A \to A_i$, and moreover we have $V_f \cong \bigoplus_{i=1}^n V_{f,i}$ as $A$-modules and similarly $V_g \cong \bigoplus_{i=1}^n V_{g,i}$. We can obtain an $A$-module isomorphism $\varphi: V_f \to V_g$ via direct sum of these $\varphi_i$. We may write such an isomorphism as left-multiplication by an invertible matrix $G \in GL_m(\k)$. Hence, for all $v \in k^m$ and $a \in A$ we have $G f(a) v = g(a) G v$, proving the theorem.
\end{proof}

\begin{remark} \label{rem_4_0_1}
    Using the setup of Theorem \ref{skolem_noether_quantum}, with $f: A_1 \times ... \times A_n \to M_m(\k)$ a unital homomorphism, it can be observed that $\sum_{i=1}^n \dim_\k(A_i \otimes_A V_f) = \dim_\K(\prod_{i=1}^n A_i \times_A V_f) = \dim_\k(A \times_A V_f) = \dim_\K(\K^m) = m$. For our purposes, we're interested in the case where each $A_i$ is a matrix algebra $M_{n_i}(\k)$. In such a case, by the previous theorem we may assume that the map $M_{n_i}(\K) \to M_m(\K)$ is an inclusion of matrix algebras or the zero map. Hence for $\sigma \in M_{n_i}(\k)$ the image of the induced $\k$-linear map $f(\sigma): \k^m \to \k^m$ will be a subspace whose dimension is a multiple of $n_i$. Hence $\dim_\k(M_{n_i}(\k) \otimes_A V_f)$ is a multiple of $n_i$.
\end{remark}

\begin{corollary} \label{skolem_noether_quantum_2}
    Let $B = \prod_{\alpha \in E} M_{m_\alpha}(\k)$ be a finite product of matrix algebras over $\k$, and let $A = A_1 \times A_2 \times ... \times A_n$ be a product of simple $\k$-algebras. Then for (unital) ring homomorphisms $f,g: A \to B$, let $V_{f,\alpha},V_{g, \alpha}$ denote $\k^{m_\alpha}$ with an $A$-module structure via $f,g$ (respectively) followed by projection $B \to M_{m_\alpha}(\k)$. If for all $i=1,...,n$ and $\alpha \in E$, we have $\dim_\k(A_i \otimes_A V_{f,\alpha}) = \dim_\k(A_i \otimes_A V_{g,\alpha})$, then there is some unit $G \in B^* \cong \prod_{\alpha \in E} GL_{m_\alpha}(\k)$ such that for all $a \in A$, $f(a) = G^{-1} g(a) G$.
\end{corollary}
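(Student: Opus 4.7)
The plan is to reduce Corollary \ref{skolem_noether_quantum_2} to Theorem \ref{skolem_noether_quantum} by projecting onto each matrix factor of $B$. For each $\alpha \in E$, I let $\pi_\alpha \colon B \to M_{m_\alpha}(\k)$ denote the canonical projection and set $f_\alpha := \pi_\alpha \circ f$ and $g_\alpha := \pi_\alpha \circ g$. Since $f, g$ are unital ring homomorphisms, so are $f_\alpha, g_\alpha$, and by construction the $A$-modules $V_{f,\alpha}, V_{g,\alpha}$ described in the statement are exactly the modules $V_{f_\alpha}, V_{g_\alpha}$ appearing in the hypothesis of Theorem \ref{skolem_noether_quantum} applied to these projected maps.

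With this identification in place, the assumption $\dim_\k(A_i \otimes_A V_{f,\alpha}) = \dim_\k(A_i \otimes_A V_{g,\alpha})$ for each $i \in \{1,\dots,n\}$ is precisely the dimension hypothesis of Theorem \ref{skolem_noether_quantum} for the pair $(f_\alpha, g_\alpha)$. Applying the theorem for each $\alpha \in E$ yields invertible matrices $G_\alpha \in GL_{m_\alpha}(\k)$ such that $f_\alpha(a) = G_\alpha^{-1} g_\alpha(a) G_\alpha$ for all $a \in A$.

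Finally, using the identification $B^* \cong \prod_{\alpha \in E} GL_{m_\alpha}(\k)$, I assemble the tuple $G := (G_\alpha)_{\alpha \in E} \in B^*$. Since multiplication and inversion in $B$ occur componentwise, and since $f, g$ are themselves determined componentwise by the $f_\alpha, g_\alpha$, the per-factor identities $f_\alpha(a) = G_\alpha^{-1} g_\alpha(a) G_\alpha$ glue to give $f(a) = G^{-1} g(a) G$ for every $a \in A$, as required.

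The argument is essentially pure bookkeeping, and I do not expect any meaningful obstacle: the analytic content is all contained in Theorem \ref{skolem_noether_quantum}. The only step deserving a moment of scrutiny is the identification of $V_{f,\alpha}$ with the module appearing in the theorem's hypothesis for $f_\alpha$, but this is immediate since both use the same projection $\pi_\alpha$ to build the $A$-action on $\k^{m_\alpha}$.
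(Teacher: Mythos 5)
Your argument is correct and matches the paper's own proof essentially verbatim: both project onto each matrix factor via $\pi_\alpha$, apply Theorem \ref{skolem_noether_quantum} to the pair $(f_\alpha, g_\alpha)$, and assemble the resulting $G_\alpha$ componentwise into a unit $G \in B^*$. No gaps.
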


\begin{proof}
    Let $\pi_\alpha: B \to M_{m_\alpha}(\k)$ denote the projection map and $f_\alpha := \pi_\alpha \circ f$, $g_\alpha := \pi_\alpha \circ g$. By Theorem \ref{skolem_noether_quantum}, for each $\alpha \in E$ there is some $G_\alpha \in GL_{m_\alpha}(\k)$ such that $f_\alpha(a) = G^{-1}_\alpha g_\alpha(a) G_\alpha$ for all $a \in A$. By the universal property of the product, letting $G := (G_\alpha)_{\alpha \in E} \in B$, we have $f(a) = G^{-1} g(a) G$ for all $a \in A$.
\end{proof}

Our next goal is to classify QQs up to a good notion of equivalence. Classification up to isomorphism is rather tricky, so we have settled on a weaker notion of equivalence which we'll term ``weak isomorphism". Before we introduce this definition, we need some notation. In general, an homomorphism of finite dimensional $C^*$-algebras takes the form
$$\mu: \prod_{i \in I} M_{n_i}(\C) \to \prod_{j \in J} M_{m_j}(\C)$$
where the $n_i, m_j$ are positive integers and $I,J$ are finite indexing sets. If $\mu$ is an isomorphism, it will induce a bijection $\mu_0: I \to J$ such that $n_i = m_{\mu_0(i)}$. Indeed, a bijection $\sigma: I \to J$ with $n_i = m_{\sigma(i)}$ determines an isomorphism of the above form (although not uniquely).

For a QQ $\scr{B} = (B_0,B_1, \r_B, \s_B)$, denote the indexing set of $B_i$ by $I_i({B}), i = 0, 1.$ To simplify the notation we will often denote the indexing sets as $I_0$ and $I_1$ where no confusion arises.

\begin{definition}
    Let $\scr{B} = (B_0, B_1, \r,\s)$ be a QQ, with $B_0, B_1$ having decompositions;
    $$B_0 = \prod_{v \in I_0} M_{n_v}(\C), \text{ and} $$
    $$B_1 = \prod_{\alpha \in I_1} M_{m_\alpha}(\C).$$
    Let $\frk{t}$ denote either $\r$ or $\s$ and $\frk{t}_\alpha$ denote $\pi_\alpha \circ \frk{t}$ where $\pi_\alpha: B_1 \to M_{m_\alpha}(\C)$ is the projection. By Remark \ref{rem_4_0_1}, we have that for all $v \in I_0, \alpha \in I_1$, $\dim_\C(M_{n_v}(\C) \otimes_{B_0} V_{\frk{t}_\alpha}) $ is an integer multiple of $n_v$, where $V_{\frk{t}_\alpha}$ denotes $\C^{m_\alpha}$ with $B_0$-action induced via $\frk{t}_\alpha$. Call the nonnegative integer
    $$\order(\frk{t},v,\alpha) := \frac{\dim_\C(M_{n_v}(\C) \otimes_{B_0} V_{\frk{t}_\alpha})}{n_v}$$
    the \textit{order of $v$ in $\alpha$ under $\frk{t}$}. By Remark \ref{rem_4_0_1}, we have $\sum_{v \in I_0} \order(\frk{t}, v, \alpha) n_v = m_\alpha$.
\end{definition}

\begin{definition}
    Suppose that for QQ $\scr{B}, \scr{C}$, there are isomorphisms $F_0: B_0 \to C_0$, $F_1: B_1 \to C_1$ which induce bijections $f_0: I_0(\scr{B}) \to I_0(\scr{C})$, $f_1: I_1(\scr{B}) \to I_1(\scr{C})$. We say $(F_0,F_1)$ is a \emph{weak isomorphism} if all $v \in I_0(\scr{B}), \alpha \in I_1(\scr{B})$:
    \begin{enumerate}
        \item $ \order(\r_B, v, \alpha) = \order(\r_C, f_0(v), f_1(\alpha))$, and
        \item $\order(\s_B, v, \alpha) = \order(\s_C, f_0(v), f_1(\alpha))$.
    \end{enumerate}
    We say $\scr{B}$ and $\scr{C}$ are weakly isomorphic if such a weak isomorphism between them exists.
\end{definition}

It is clear that isomorphic QQs are weakly isomorphic. A natural question is then to ask under what cirumstances the converse holds. The next proposition gives some insight into this. If $R$ is a ring, we use the notation $R^*$ to denote the units of $R$.

\begin{proposition}
    Let $\scr{B}, \scr{C}$ be QQs with a weak isomorphism $(F_0, F_1)$ between them. Then there exists $G_r,G_s \in C_1^*$ such that for all $b \in B_0$, $G_r (F_1 \circ \r_B)(b) G_r^{-1} = (\r_C \circ F_0)(b)$ and $G_s (F_1 \circ \s_B)(b) G_s^{-1} = (\s_C \circ F_0)(b)$. In particular, if $C_1$ is commutative, then $(F_0,F_1)$ constitute an isomorphism.
\end{proposition}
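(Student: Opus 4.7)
The plan is to apply Corollary \ref{skolem_noether_quantum_2} twice, once to the pair of unital $*$-homomorphisms $f := F_1 \circ \r_B$ and $g := \r_C \circ F_0$ viewed as maps $B_0 \to C_1$, and once to the analogous pair built from $\s_B, \s_C$. This immediately produces the elements $G_r, G_s \in C_1^*$ with the stated conjugation identities, provided we can verify the dimension hypothesis of the corollary. The commutativity addendum is then automatic, since in a commutative $C^*$-algebra conjugation is trivial and the resulting equalities $F_1 \circ \r_B = \r_C \circ F_0$ and $F_1 \circ \s_B = \s_C \circ F_0$ are precisely the squares required for $(F_0, F_1)$ to be a morphism (hence an isomorphism) of QQs.

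The substantive step is the dimension check. Writing $B_0 = \prod_{v \in I_0} M_{n_v}(\C)$ and $C_1 = \prod_{\beta \in I_1(\scr{C})} M_{m'_\beta}(\C)$, I would verify for every $v$ and every $\beta$ that
\[\dim_\C\bigl(M_{n_v}(\C) \otimes_{B_0} V_{f_\beta}\bigr) = \dim_\C\bigl(M_{n_v}(\C) \otimes_{B_0} V_{g_\beta}\bigr),\]
where $f_\beta, g_\beta$ denote $\pi_\beta \circ f$ and $\pi_\beta \circ g$ respectively. Since for any unital homomorphism $h: B_0 \to M_{m'_\beta}(\C)$ the tensor product $M_{n_v}(\C) \otimes_{B_0} V_h$ is canonically $h(1_v) \cdot \C^{m'_\beta}$, the problem reduces to computing the ranks of $f_\beta(1_v)$ and $g_\beta(1_v)$ as matrices.

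For $f = F_1 \circ \r_B$, the $*$-isomorphism $F_1$ sends the $\alpha$-block of $B_1$ isomorphically onto the $f_1(\alpha)$-block of $C_1$; setting $\alpha = f_1^{-1}(\beta)$, the map $\pi_\beta \circ F_1$ factors as a $*$-isomorphism $M_{m_\alpha}(\C) \to M_{m'_\beta}(\C)$ precomposed with $\pi_\alpha$. Ranks are preserved under $*$-isomorphism, so the rank of $f_\beta(1_v)$ equals the rank of $(\r_B)_\alpha(1_v)$, which by the definition of order equals $n_v \cdot \order(\r_B, v, \alpha)$. For $g = \r_C \circ F_0$, the $*$-isomorphism $F_0$ sends $1_v \in B_0$ to $1_{f_0(v)} \in C_0$, so $g_\beta(1_v) = (\r_C)_\beta(1_{f_0(v)})$, whose rank is $n_{f_0(v)} \cdot \order(\r_C, f_0(v), \beta) = n_v \cdot \order(\r_C, f_0(v), \beta)$. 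The weak isomorphism hypothesis $\order(\r_B, v, \alpha) = \order(\r_C, f_0(v), f_1(\alpha)) = \order(\r_C, f_0(v), \beta)$ now shows the two ranks agree, completing the hypothesis of Corollary \ref{skolem_noether_quantum_2}.

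The main bookkeeping obstacle is just keeping straight how $F_0$ and $F_1$ interact with the block decompositions of $B_0, B_1, C_0, C_1$; this is where the decision to phrase weak isomorphism in terms of the invariant $\order(\cdot, v, \alpha)$ pays off, because it is manifestly preserved under pre- or post-composition with a block-preserving $*$-isomorphism. Once both dimension equalities are established, the corollary produces $G_r$ satisfying $f(b) = G_r^{-1} g(b) G_r$, which rearranges to the displayed identity $G_r (F_1 \circ \r_B)(b) G_r^{-1} = (\r_C \circ F_0)(b)$; repeating verbatim with $\s$ in place of $\r$ yields $G_s$, and commutativity of $C_1$ forces $G_r, G_s$ to act trivially, giving the final assertion.
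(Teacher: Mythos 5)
Your proposal is correct and follows exactly the route the paper takes: the paper's proof is the single line ``Follows from Corollary \ref{skolem_noether_quantum_2},'' and your argument simply supplies the omitted verification of that corollary's dimension hypothesis via the rank computation $\dim_\C(M_{n_v}(\C)\otimes_{B_0} V_{h_\beta}) = \operatorname{rank}(h_\beta(1_v))$ and the invariance of $\order(\cdot,v,\alpha)$ under the block-preserving isomorphisms $F_0, F_1$. The commutativity addendum is handled the same way the authors intend.
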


\begin{proof}
    Follows from Corollary \ref{skolem_noether_quantum_2}.
\end{proof}

Hence, we see that the inability of weak isomorphism to be an isomorphism is in a sense measured by the inability to choose $G_r,G_s$ such that they are equal up to multiplication by the center of $C_1^*$.

\begin{example}
    Let $B_1 = M_3(\C)$, $B_0 = \C^2$. Let $\s = \r = \r': B_0 \to B_1: (\lambda_1, \lambda_2) \mapsto \diag(\lambda_1, \lambda_1, \lambda_2)$ where $\diag(a,b,c)$ denotes the $3 \times 3$-diagonal matrix with entries $a,b,c$. Let $\s' = B_0 \to B_1: (\lambda_1, \lambda_2) \mapsto \diag(\lambda_1, \lambda_2, \lambda_1)$. Then the QQs $(B_0,B_1, \r,\s)$ and $(B_0,B_1,\r',\s')$ are weakly isomorphic, but not isomorphic.
\end{example}

\begin{theorem} \label{thm4_6}
     Suppose we have two finite dimensional $C^*$-algebras $B_0, B_1$ with $B_0 = \prod_{v \in I_0} M_{n_v}(\C)$ and $B_1 = \prod_{\alpha \in I_1} M_{m_\alpha}(\C)$ and a collection of nonnegative integers $R_{v,\alpha}, S_{v, \alpha}$ such that $\sum_{v \in I_0} n_v R_{v,\alpha} = \sum_{v \in I_0} n_v S_{v,\alpha} = m_\alpha$ for all $\alpha \in I_1$. Then there exist $*$- homomorphisms $\r,\s: B_0 \to B_1$, such that $(B_0,B_1, \r,\s)$ is a QQ with $\order(\r, v, \alpha) = R_{v, \alpha}$ and $\order(\s, v, \alpha) = S_{v, \alpha}$.
\end{theorem}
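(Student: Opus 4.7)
The plan is to construct $\r$ and $\s$ componentwise. Since $B_1 = \prod_{\alpha \in I_1} M_{m_\alpha}(\C)$, by the universal property of the product it suffices to construct, for each $\alpha \in I_1$, unital $*$-homomorphisms $\r_\alpha, \s_\alpha : B_0 \to M_{m_\alpha}(\C)$ with the prescribed orders, and then take $\r := (\r_\alpha)_{\alpha \in I_1}$ and $\s := (\s_\alpha)_{\alpha \in I_1}$. By symmetry it is enough to produce $\r_\alpha$ realising the data $R_{v,\alpha}$; the construction of $\s_\alpha$ from $S_{v,\alpha}$ is identical.

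Fix $\alpha \in I_1$. Because $\sum_{v \in I_0} n_v R_{v,\alpha} = m_\alpha$, I can partition the index set $\{1,\dots,m_\alpha\}$ into blocks $\{P_{v,j}\}$ indexed by pairs $(v,j)$ with $v \in I_0$ and $1 \le j \le R_{v,\alpha}$, where each block $P_{v,j}$ has cardinality $n_v$. This partition determines an orthogonal decomposition $\C^{m_\alpha} = \bigoplus_{v,j} W_{v,j}$ with $\dim W_{v,j} = n_v$, and in turn a block-diagonal embedding
\[
\iota_v : M_{n_v}(\C) \longrightarrow M_{m_\alpha}(\C), \qquad a \mapsto \bigoplus_{j=1}^{R_{v,\alpha}} a
\]
into the $n_v$-sized blocks corresponding to the indices $(v,j)$, with zeros elsewhere. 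Define $\r_\alpha := \sum_{v \in I_0} \iota_v \circ \pi_v$, where $\pi_v : B_0 \to M_{n_v}(\C)$ is the canonical projection.

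The verification that $\r_\alpha$ is a unital $*$-homomorphism is routine: each $\iota_v$ is one, the images $\iota_v(M_{n_v}(\C))$ for distinct $v$ live in mutually orthogonal corners of $M_{m_\alpha}(\C)$ (so their images multiply to zero across different $v$), and $\r_\alpha(1_{B_0}) = \sum_v \iota_v(1_{M_{n_v}}) = 1_{M_{m_\alpha}}$ by the counting identity $\sum_v n_v R_{v,\alpha} = m_\alpha$.

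It remains to compute $\order(\r, v, \alpha)$. By construction $\r_\alpha(1_v)$ is a projection of rank $n_v R_{v,\alpha}$. Via the Peirce decomposition $V_{\r_\alpha} = \bigoplus_{w \in I_0} \r_\alpha(1_w) V_{\r_\alpha}$, and the fact that $M_{n_v}(\C) \otimes_{B_0} V_{\r_\alpha} \cong \r_\alpha(1_v) V_{\r_\alpha}$ as $M_{n_v}(\C)$-modules (because $M_{n_v}(\C)$ is a direct factor of $B_0$ cut out by the central idempotent $1_v$), we obtain
\[
\dim_\C\bigl(M_{n_v}(\C) \otimes_{B_0} V_{\r_\alpha}\bigr) = \operatorname{rank}(\r_\alpha(1_v)) = n_v R_{v,\alpha},
\]
so $\order(\r, v, \alpha) = R_{v,\alpha}$ as required. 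Assembling the $\r_\alpha$ and the analogously-constructed $\s_\alpha$ yields the desired QQ. There is no real obstacle here; the only subtle point is correctly identifying $M_{n_v}(\C) \otimes_{B_0} V_{\r_\alpha}$ with the image of the projection $\r_\alpha(1_v)$, which follows immediately from the product structure of $B_0$.
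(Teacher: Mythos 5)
Your construction is essentially identical to the paper's: the paper also defines $\r$ (resp.\ $\s$) componentwise by embedding $R_{v,\alpha}$ (resp.\ $S_{v,\alpha}$) diagonal copies of $M_{n_v}(\C)$ into $M_{m_\alpha}(\C)$ without overlap, using the counting identity $\sum_v n_v R_{v,\alpha} = m_\alpha$ to see the embedding is unital, and leaves the order computation as routine. Your write-up is correct and in fact supplies the details (the identification $M_{n_v}(\C)\otimes_{B_0}V_{\r_\alpha}\cong \r_\alpha(1_v)\C^{m_\alpha}$) that the paper omits.
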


\begin{proof} 
    The idea is as follows: we define the map $\r: B_0 \to B_1$ by, for each $\alpha \in I_1$ and $v \in I_0$, embedding a copy of $M_{n_v}(\C)$ along the diagonal of $M_{m_\alpha}(\C)$ $R_{v,\alpha}$-times without overlap.

It should be noted that this method of defining $\r$ is only possible since \begin{align*}\sum_{v \in I_0} n_v R_{v,\alpha} = m_\alpha.\end{align*}We define $\s$ similarly, using the $S_{v,\alpha}$ instead of the $R_{v,\alpha}$. Then $\scr{B} = (B_0,B_1,\r,\s)$ is the QQ we desire, and checking that $\order(\r,v,\alpha) = R_{v,\alpha}$ and $\order(\s,v,\alpha) = S_{v,\alpha}$ is routine.
\end{proof}

\begin{remark} \label{rem4_2_1}
    It is clear that the QQ constructed above is unique up to weak isomorphism, but not isomorphism.
\end{remark}

We finish this section by introducing a powerful visual tool for understanding QQ, which we've termed quantum quiver diagrams, which utilizes the ideas behind the above theorem and remark. Given a QQ $\scr{B}$ with $B_0 = \prod_{v \in I_0} M_{n_v}(\C)$ and $B_1 = \prod_{\alpha \in I_1} M_{m_\alpha}(\C)$, we write each of the integers $n_v, v \in I_0$ with a circular border (which we will call the \textit{$v$-node}), and each of the integers $m_\alpha, \alpha \in I_1$ with no border (which we will call the \textit{$\alpha$-node}). Then for each $(v,\alpha) \in I_0 \times I_1$ we draw $\order(\s,v,\alpha)$-lines from the $\alpha$-node to the $v$-node, and $\order(\r,v,\alpha)$-arrows from the $\alpha$-node to the $v$-node.

We illustrate some examples. Throughout the rest of this section, we use $M_n$ as shorthand to denote $M_n(\C)$. Given $B_0 = M_1 \oplus M_3 \oplus M_2 \oplus M_2 \oplus M_4$ and $B_1 = M_4 \oplus M_6$, we wish to construct a QQ on these $C^*$-algebras with orders given by the following tables (a missing entry indicates $0$).

\[ \begin{tabular}{c||c c}
    $\order(\s,v_i, \alpha_j)$ & $\alpha_1$ ($m_{\alpha_1} = 4$) & $\alpha_2$ ($m_{\alpha_2} = 6$) \\
    \hline \hline
     $v_1$ ($n_{v_1} = 1$) & 1 \\
     $v_2$ ($n_{v_2} = 3$) & 1 & 2 \\
     $v_3$ ($n_{v_3} = 2$) & \\
     $v_4$ ($n_{v_4} = 2$) & \\
     $v_5$ ($n_{v_5} = 4$) & \\
\end{tabular} \]

\[ \begin{tabular}{c||c c}
    $\order(\r,v_i, \alpha_j)$ & $\alpha_1$ ($m_{\alpha_1} = 4$) & $\alpha_2$ ($m_{\alpha_2} = 6$) \\
    \hline \hline
     $v_1$ ($n_{v_1} = 1$) & \\
     $v_2$ ($n_{v_2} = 3$) & \\
     $v_3$ ($n_{v_3} = 2$) & 1 \\
     $v_4$ ($n_{v_4} = 2$) & 1 & 1\\
     $v_5$ ($n_{v_5} = 4$) & & 1\\
\end{tabular} \]

Then the associated quantum quiver diagram is as below,
\begin{equation}\label{qqdiagram2}
\begin{aligned} 
\begin{tikzpicture}[
vertex/.style={circle, draw=black, ultra thick, minimum size=7mm},
edge/.style={rectangle, minimum size=7mm},
]
\node[edge] [label = {$\alpha_1$}] (e0) at (0,0) {4};
\node[edge] [label = {$\alpha_2$}] (e1) at (2,0) {6};
\node[vertex] [label = {$v_1$}] (v0) at (-1,1) {1};
\node[vertex] [label = {$v_2$}] (v1) at (1,1) {3};
\node[vertex] [label = {$v_3$}] (v3) at (-1,-1) {2};
\node[vertex] [label = {$v_4$}] (v4) at (1,-1) {2};
\node[vertex] [label = {$v_5$}] (v5) at (3,-1) {4};
\draw[-] (v0)--(e0);
\draw[-] (v1)--(e0);
\draw[-] (v1) to [bend left] (e1);
\draw[-] (v1) to [bend right] (e1);
\draw[->] (e0)--(v3);
\draw[->] (e0)--(v4);
\draw[->] (e1)--(v4);
\draw[->] (e1)--(v5);
\end{tikzpicture}
\end{aligned}
\end{equation}

For another example, consider the case where $B_1 = M_1 \oplus M_1 \oplus M_1$ is commutative, where $B_0 = M_2 \oplus M_1 \oplus M_1 \oplus M_1$, and where the orders are given by the following table.

\[ \begin{tabular}{c||c c c}
    $\order(\s,v_i, \alpha_j)$ & $\alpha_1$ ($m_{\alpha_1} = 1$) & $\alpha_2$ ($m_{\alpha_2} = 1$) & $\alpha_3$ ($m_{\alpha_3} = 1$) \\
    \hline \hline
     $v_1$ ($n_{v_1} = 2$) & \\
     $v_2$ ($n_{v_2} = 1$) & 1 & 1 & 1 \\
     $v_3$ ($n_{v_3} = 1$) & \\
     $v_4$ ($n_{v_4} = 1$) & \\
\end{tabular} \]

\[ \begin{tabular}{c||c c c}
    $\order(\r,v_i, \alpha_j)$ & $\alpha_1$ ($m_{\alpha_1} = 1$) & $\alpha_2$ ($m_{\alpha_2} = 1$) & $\alpha_3$ ($m_{\alpha_3} = 1$) \\
    \hline \hline
     $v_1$ ($n_{v_1} = 2$) & \\
     $v_2$ ($n_{v_2} = 1$) & \\
     $v_3$ ($n_{v_3} = 1$) & 1 \\
     $v_4$ ($n_{v_4} = 1$) & & 1 & 1\\
\end{tabular} \]

Then the associated quantum quiver is then given by the following.
\begin{equation}
\begin{aligned}
\begin{tikzpicture}[
vertex/.style={circle, draw=black, ultra thick, minimum size=7mm},
edge/.style={rectangle, minimum size=7mm},
]
\node[edge] [label = {$\alpha_1$}] (e0) at (-1,1) {1};
\node[edge] [label = {$\alpha_2$}, below] (e1) at (1,1) {1};
\node[edge] [label = {$\alpha_3$}] (e2) at (1,2) {1};
\node[vertex] [label = {$v_1$}] (v0) at (-2,2) {2};
\node[vertex] [label = {$v_2$}] (v1) at (0,2) {1};
\node[vertex] [label = {$v_3$}] (v2) at (-2,0) {1};
\node[vertex] [label = {$v_4$}] (v3) at (2,1) {1};
\draw[-] (v1)--(e0);
\draw[-] (v1)--(e1);
\draw[-] (v1)--(e2);
\draw[->] (e0)--(v2);
\draw[->] (e1)--(v3);
\draw[->] (e2)--(v3);
\end{tikzpicture}
\end{aligned}
\end{equation}

Note that if we remove the copy of $M_2$ from $B_0$, making both $B_0,B_1$ commutative, we recover the following directed graph diagram.
\begin{equation}
\begin{aligned}
\begin{tikzpicture}
\node (v1) [label = {$v_2$}] at (0,1) {$\bullet$};
\node (v2) [label = {$v_3$}] at (-1,0) {$\bullet$};
\node (v3) [label = {$v_4$}, right] at (1,0) {$\bullet$};
\draw[->] (v1)--(v2);
\draw[->] (v1) to [bend left] (v3);
\draw[->] (v1) to [bend right] (v3);
\end{tikzpicture}
\end{aligned}
\end{equation}
As one would expect, this happens generally. If $B_0,B_1$ are commutative, then the quantum quiver diagram coincides with the usual directed graph diagram.

\section{Applications to Leavitt path algebras}

 In this section we apply QQ in the context of Leavitt path algebras. As the name suggests, this involves constructing a suitable notion of paths in the noncommutative context, which we do using properties of matrix decompositions of $C^*$-algebras.

\begin{definition}
    Call a (non-unital) homomorphism of matrix algebras $f: M_n(\K) \to M_m(\K)$ \emph{regular} if it embeds one or more copies of $M_n(\K)$ along the diagonal of $M_m(\K)$. We trivially say that a zero map is regular. Call a map of finite dimensional $C^*$-algebras $f: \bigoplus_i M_{n_i}(\C) \to \bigoplus_j M_{n_j}(\C)$ \emph{regular} if for all $i,j$ the induced map $f_{i,j} : M_{n_i}(\C) \to M_{m_j}(\C)$ is regular. Call a QQ $B = (B_0,B_1, \r,\s)$ \emph{regular} if both $\r,\s$ are regular.
\end{definition}

Note that a homomorphism $f: M_n(\K) \to M_m(\K)$ is regular if and only if it can be factored as $\iota \circ \Delta$ where $\Delta: M_n(\K) \to M_n(\K)^N$ is a diagonal embedding for some $N$, and $\iota: M_n(\K)^N \to M_m(\K)$ is an inclusion map (implying $nN \leq m$).

We tacitly assume for the rest of this section that a QQ $(B_0,B_1,\r,\s)$, which will just be denoted by $\scr{B}$, has $B_0, B_1$ having the decompositions:

\begin{enumerate}
    \item $B_0 = \prod_{v \in I_0} M_{n_v}(\C)$, and
    \item $B_1 = \prod_{\alpha \in I_1} M_{m_\alpha}(\C)$,
\end{enumerate}

where $I_0,I_1$ are finite indexing sets. Hence $\scr{B}$ being regular implies $\r: B_0 \to B_1$ can be written in the form $e^v_{ij} \mapsto \sum_{(\alpha, i', j') \in R(v,i,j)} e^\alpha_{i'j'}$ where $R(v,i,j) \subset \scr{E}_1 := \{(\alpha, i',j') : \alpha \in I_1, 1 \leq i',j' \leq m_\alpha\}$. In this way we see $R$ as a function on $\scr{E}_0 := \{(v,i,j): v \in I_0, 1 \leq i,j \leq n_v\}$ which takes values in subsets of $\scr{E}_1$. We may similarly define $S: \scr{E}_0 \to 2^{\scr{E}_1}$ such that $\s(e^v_{ij}) = \sum_{(\alpha,i',j') \in S(v,i,j)} e^\alpha_{i'j'}$.

\begin{lemma} \label{lem_4_3}
    Let $\scr{B}$ be a regular QQ. Then for all generators $e^{\alpha}_{i'j'} \in B_1$, there is at most one $e^v_{ij} \in B_0$ such that $(\alpha,i',j') \in R(v,i,j)$. Moreover, if $i' = j'$, there is precisely one $e^v_{ij} \in B_0$ where $i = j$ such that $(\alpha, i',j') \in R(v,i,j)$.
\end{lemma}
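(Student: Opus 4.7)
The plan is to combine regularity of $\r$ with the orthogonality between distinct matrix-algebra summands of $B_0$. Fix $\alpha \in I_1$ and consider the component $\r_\alpha := \pi_\alpha \circ \r : B_0 \to M_{m_\alpha}(\C)$, where $\pi_\alpha$ is the canonical projection. Since $B_0 = \prod_v M_{n_v}(\C)$ and $\r_\alpha$ is a $*$-homomorphism, for $v \neq v'$ the images $\r_\alpha(M_{n_v}(\C))$ and $\r_\alpha(M_{n_{v'}}(\C))$ are mutually annihilating in $M_{m_\alpha}(\C)$.

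Next I would invoke regularity: each restriction $\r_\alpha|_{M_{n_v}(\C)}$ embeds $\order(\r,v,\alpha)$ disjoint copies of $M_{n_v}(\C)$ along the diagonal of $M_{m_\alpha}(\C)$. Combining this with the orthogonality just observed, the full image of $\r_\alpha$ acquires a pairwise disjoint block-diagonal structure: there exist disjoint index subsets $J_{v,k} \subseteq \{1,\ldots,m_\alpha\}$ with $|J_{v,k}| = n_v$ (for $v \in I_0$ and $k = 1,\ldots,\order(\r,v,\alpha)$) and bijections $\sigma_{v,k}: \{1,\ldots,n_v\} \to J_{v,k}$ such that $\r_\alpha(e^v_{ij}) = \sum_{k} e^\alpha_{\sigma_{v,k}(i),\, \sigma_{v,k}(j)}$. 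This is the key structural picture from which everything will follow.

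With this normal form in hand, the first claim is immediate: if $e^\alpha_{i'j'}$ appears in $\r(e^v_{ij})$, then $(i',j') = (\sigma_{v,k}(i),\sigma_{v,k}(j))$ for some $k$, and since the row subsets $J_{v,k}$ are pairwise disjoint across all $(v,k)$, the index $i'$ alone determines $(v,k)$, and then $i = \sigma_{v,k}^{-1}(i')$; hence no other generator can map onto a sum containing $e^\alpha_{i'j'}$. For the moreover part, unit-preservation yields $\r_\alpha(1_{B_0}) = 1_{M_{m_\alpha}(\C)}$, and expanding $1_{B_0} = \sum_{v,i} e^v_{ii}$ via the normal form gives $\sum_{v,k,i} e^\alpha_{\sigma_{v,k}(i),\,\sigma_{v,k}(i)} = \sum_{i'} e^\alpha_{i'i'}$. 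Since the blocks are pairwise disjoint with total size $m_\alpha$, every diagonal position $(i',i')$ is hit precisely once, by a term coming from some $e^v_{ii}$ (so with $i = j$), which combined with the at-most-one half gives the claimed unique preimage.

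The main obstacle is really only careful bookkeeping: one needs to see clearly that combining the regularity of each $\r_\alpha|_{M_{n_v}(\C)}$ with the orthogonality forced by $B_0$ being a product yields the pairwise disjoint block-diagonal decomposition described above. Once this structure is made explicit, both halves of the lemma read off immediately, and the proof becomes essentially a combinatorial verification rather than a substantive algebraic argument.
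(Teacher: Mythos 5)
Your argument is correct, but it is organized differently from the paper's. You first build an explicit normal form for each component $\r_\alpha = \pi_\alpha \circ \r$: regularity supplies, for each summand $M_{n_v}(\C)$, a family of diagonal blocks $J_{v,k}$, and the mutual annihilation of distinct summands of $B_0$ forces the blocks attached to different $v$ to occupy pairwise disjoint index sets (if two overlapped at an index $p$, the images of the orthogonal units $1_v$ and $1_{v'}$ would both contain $e^\alpha_{pp}$ and could not multiply to zero). Both claims are then read off from this picture: disjointness of the $J_{v,k}$ gives ``at most one,'' and unit preservation gives ``precisely one'' on the diagonal. The paper proceeds in the opposite order: it first gets the diagonal (``moreover'') statement directly from $\r$ preserving units, and then handles an off-diagonal coincidence by contradiction --- if $e^\alpha_{i'j'}$ appeared in the images of two distinct generators, then since a regular map preserves multiplication and transposes, $e^\alpha_{i'i'}$ (or $e^\alpha_{j'j'}$) would appear in the images of two distinct diagonal generators, contradicting the diagonal case. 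Your route costs a little bookkeeping to set up the bijections $\sigma_{v,k}$, but it makes the block-diagonal structure explicit and immediately reusable (the paper essentially re-derives that structure in the subsequent theorem describing $\r^*$); the paper's route is shorter but leaves the reduction from off-diagonal to diagonal, and the case split between the $i$- and $j$-indices, largely to the reader. Both ultimately rest on the same three facts: regularity, orthogonality of distinct matrix summands, and unit preservation.
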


Before we state the proof, it is important to note that this lemma still applies if we replace $R$ with $S$, and $\r$ with $\s$.

\begin{proof}
    The second statement follows from $\r$ preserving units. To see the first statement, suppose we have $(\alpha, i',j') \in \scr{E}_1$, $(v_1,i_1,j_1), (v_2,i_2,j_2) \in \scr{E}_0$ such that $(\alpha, i',j') \in R(v_1,i_1,j_1) \cap R(v_2,i_2,j_2)$. By definition, we may write
    \begin{align*}
        \r(e^{v_1}_{i_1 j_1}) & = e^\alpha_{i'j'} + \sum_{(\beta, k,l) \in R(v_1,i_1,j_1) - \{(\alpha, i',j')\}} e^\beta_{kl}, \\
        \r(e^{v_2}_{i_2 j_2}) & = e^\alpha_{i'j'} + \sum_{(\beta, k,l) \in R(v_2,i_2,j_2) - \{(\alpha, i',j')\}} e^\beta_{kl},
    \end{align*}
    where $\gamma_1 := \sum_{(\beta, k,l) \in R(v_1,i_1,j_1) - \{(\alpha, i',j')\}} e^\beta_{kl}$ is orthogonal to $e^\alpha_{i'j'}$ and 
    
    $\gamma_2 := \sum_{(\beta, k,l) \in R(v_2,i_2,j_2) - \{(\alpha, i',j')\}} e^\beta_{kl}$ is orthogonal to $e^\alpha_{i'j'}$. Since $\r$ is regular, it preserves taking transposes. Combining this with $\r$ preserving multiplication yields
    \begin{align*}
        \r(e^{v_1}_{i_1 i_1}) & = e^\alpha_{i'i'} + \sum_{(\beta, k,l) \in R(v_1,i_1,i_1) - \{(\alpha, i',i')\}} e^\beta_{kl}, \\
        \r(e^{v_2}_{i_2 i_2}) & = e^\alpha_{i'i'} + \sum_{(\beta, k,l) \in R(v_2,i_2,i_2) - \{(\alpha, i',i')\}} e^\beta_{kl}
    \end{align*}
    which contradicts the second statement.
\end{proof}

This gives us a useful way to characterize the adjoint map.

\begin{theorem}
    For a regular QQ $\scr{B}$, the adjoint range map $\r^*: B_1 \to B_0$ takes the form
    $$e^\alpha_{ij} \mapsto \begin{cases}
        e^v_{i',j'} & \text{for some $v \in I_0, 1 \leq i',j' \leq n_v$ such that $(\alpha,i,j) \in R(v,i',j')$,} \\
        0 & \text{if no such $(v,i',j')$ exists.}
    \end{cases}$$
\end{theorem}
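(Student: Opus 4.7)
The plan is to compute $\r^*(e^\alpha_{ij})$ by expanding it in the orthonormal basis $\{e^v_{i'j'}\}$ of $B_0$ and reading off the coefficients from the defining adjoint identity. Recall that the inner products on $B_0$ and $B_1$ are both the trace inner products, with respect to which the matrix units $e^v_{i'j'}$ and $e^\alpha_{kl}$ are orthonormal bases. By definition of the adjoint,
\begin{equation*}
\langle \r^*(e^\alpha_{ij}),\, e^v_{i'j'} \rangle \;=\; \langle e^\alpha_{ij},\, \r(e^v_{i'j'}) \rangle.
\end{equation*}

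Using regularity of $\r$, I would substitute the explicit formula $\r(e^v_{i'j'}) = \sum_{(\beta,k,l)\in R(v,i',j')} e^\beta_{kl}$. Since the $e^\beta_{kl}$ form an orthonormal basis of $B_1$, the right-hand side equals $1$ if $(\alpha,i,j) \in R(v,i',j')$ and $0$ otherwise. Therefore
\begin{equation*}
\r^*(e^\alpha_{ij}) \;=\; \sum_{v,i',j'} \langle e^\alpha_{ij},\, \r(e^v_{i'j'}) \rangle\, e^v_{i'j'} \;=\; \sum_{(v,i',j') :\, (\alpha,i,j)\in R(v,i',j')} e^v_{i'j'}.
\end{equation*}

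The next step is to invoke Lemma \ref{lem_4_3}, which asserts that there is \emph{at most one} triple $(v,i',j') \in \scr{E}_0$ with $(\alpha,i,j) \in R(v,i',j')$. Applied to the sum above, this immediately forces the sum to contain either a single term $e^v_{i'j'}$ or no terms at all, yielding exactly the two cases in the statement. No analytic or combinatorial obstacle really arises here — the only subtle point is making sure the trace inner product is being used consistently on both $B_0$ and $B_1$, so that the adjoint $\r^*$ agrees with the one defined earlier in Section 2; but this is precisely the convention adopted after Lemma \ref{lem_3_2}. The same argument applies verbatim with $R$ replaced by $S$ and $\r$ replaced by $\s$, giving the analogous formula for $\s^*$.
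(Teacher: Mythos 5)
Your argument is correct and matches the paper's proof in substance: both compute $\r^*$ as the (conjugate) transpose of the $0$--$1$ matrix of $\r$ with respect to the orthonormal matrix-unit bases for the trace inner product, and both invoke Lemma \ref{lem_4_3} to conclude the resulting sum has at most one term. The only cosmetic remark is that expanding in an orthonormal basis with the convention $\inner{x}{y}=\trace{x^*y}$ produces the complex conjugates of the coefficients $\inner{e^\alpha_{ij}}{\r(e^v_{i'j'})}$, which is harmless here since they are all $0$ or $1$.
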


\begin{proof}
    The assignment is well-defined by Lemma \ref{lem_4_3}. The lemma then follows from a more general result for Hilbert spaces. Let $\scr{H}_1, \scr{H}_2$ be finite dimensional Hilbert spaces, with inner products $\inner{-}{-}_1, \inner{-}{-}_2$ respectively, and orthonormal bases $\{v_1,...,v_n\}, \{w_1,...,w_m\}$ respectively. Then the adjoint of a linear map $f: \scr{H}_1 \to \scr{H}_2$ (seen as an $m \times n$ matrix) is given by it's conjugate transpose. Since the $e^{\alpha}_{ij}, e^v_{ij}$ are orthonormal bases (recall the inner-product we use is $\inner{A}{B} = \operatorname{trace}(A^*B)$) for $B_1,B_0$ respectively, the result follows.
\end{proof}

We remind the reader of the definition of the Leavitt path algebra.

\begin{definition} Fix a field $\K$. The \textit{Leavitt path algebra} of a directed graph $E = (E_0,E_1,r,s)$ over $\K$, denoted $L_\K(E)$ is the associative $\K$-algebra on symbols $\{p_v: v \in E_0\} \cup E_1 \cup E_1^*$ (where $E_1^*$ is the set of symbols $\{e^*: e \in E_1\}$) subject to the following relations;
    \begin{enumerate}
        \item[L1.] $p_v p_w = \delta_{v,w} p_v$ for $v \in E_0$, where $\delta_{v,w}$ is the Kronecker delta given by 1 if $v=w$ and 0 otherwise,
        \item[L2.] $p_{s(e)} e = e = e p_{r(e)}$ for $e \in E_1$,
        \item[L3.] $p_{r(e)} e^* = e^* = e^* p_{s(e)}$ for $e \in E_1$,
        \item[L4.] $e^* f = \delta_{e,f} p_{r(f)}$ for $e,f \in E_1$, and
        \item[L5.] $p_v = \sum_{e \in s^{-1}(v)} e e^*$ for  every regular vertex $v \in E_0$.
    \end{enumerate}
\end{definition}

The LPA comes equipped with an involution defined by $e \mapsto e^*$, $e^* \mapsto e$ and $p_v \mapsto p_v$, and a $\Z$-grading defined by $|e| = -|e^*| = 1$ and $|p_v| = 0$. It can be readily observed that $L_\K(E)$ is unital with $1 = \sum_{v \in E_0} p_v$ (note that this is only true since $E_0$ is assumed to be finite).

\begin{example}
    Let $E$ denote the quiver with a single vertex $v$ and single edge $e$, so $r(e) = s(e) = v$. It is easy to observe then that $L_\K(E)$ has unit $1=p_v$ and is hence the free algebra on $\{e, e^*\}$ subject to the relation $ee^* = e^*e = 1$. Thus we have an isomorphism $L_\K(E) \to \K[t,t^{-1}]$ via $e \mapsto t, e^* \mapsto t^{-1}$. Moreover, this isomorphism preserves the grading.
\end{example}

\begin{example}
    More generally, if $E$ denotes the quiver of one vertex $v$ and $n$ edges $\{e_1,...,e_n\}$ each with source and range $v$, then for a field $\K$, $L_\K(E)$ is the \textit{Leavitt algebra} $L_\K(n)$ which by \cite{Leavitt1962} has \textit{module type} $(1,n)$; $n$ is the least integer $r > 1$ for which $L_\K(E) \cong L_\K(E)^r$ as left $L_\K(E)$-modules. One can see this as an algebraic formulation of the Cuntz algebra $\cal{O}_n$.
\end{example}

In the same way as one may derive Leavitt path algebras as algebraic formulation of graph $C^*$-algebras, we can do the same here by placing generators in Definition \ref{defn_free_quantum_B_family} and seeing what relations we get. We note that this process works very elegantly in the case of Leavitt path algebras because of how much orthogonality one has. The same is not true here: the algebra we'll create by looking at the relations imposed by Definition \ref{defn_free_quantum_B_family} is incredibly complex with relations often involving several generators. The purpose of us doing this is to study its algebraic $K_0$ value.

With this background, we now introduce analogues of classical range and source maps for quantum quivers, from which we are able to define Leavitt Path algebras over quantum quivers, and correspondingly define graph theoretic notions such as paths. We first fix the following notation:

\begin{enumerate}
    \item $r(\sigma^\alpha_{ij}) = \begin{cases}
        \rho^v_{i'j'} & \text{where $(\alpha,i,j) \in R(v,i',j')$} \\
        0 & \text{if no such $(v,i',j')$ exists},
    \end{cases}$
    \item $s(\sigma^\alpha_{ij}) = \begin{cases}
        \rho^v_{i'j'} & \text{where $(\alpha,i,j) \in S(v,i',j')$} \\
        0 & \text{if no such $(v,i',j')$ exists},
    \end{cases}$
    \item $r(\bar{\sigma}^\alpha_{ij}) = s(\sigma^\alpha_{ij})$, and
    \item $s(\bar{\sigma}^\alpha_{ij}) = r(\sigma^\alpha_{ij})$.
\end{enumerate}

\begin{definition} \label{defn_3_7_LkB}
    Let $\K$ be a field and $\scr{B} = (B_0,B_1,\r,\s)$ a regular QQ. Additionally, assume we have decompositions,  
    \begin{align*}
        B_0 = \prod_{v \in I_0} M_{n_v}(\C), \quad  B_1 = \prod_{\alpha \in I_1} M_{m_\alpha}(\C).
    \end{align*} 
    For a fixed integer $n$, denote $I_{0,n} = \{v \in I_0: n_v = n\}$. 
    Let $L_\K(\scr{B})$ denote the $\K$-algebra on generators $\{\rho^v_{ij}: v \in I_0, 1 \leq i,j \leq n_v\}$ and $\{\bar{\sigma}^\alpha_{ij}, \sigma^\alpha_{ij}: \alpha \in I_1, 1 \leq i,j \leq m_\alpha\}$, subject to the relations:
    \begin{enumerate}
        \item for all integers $n$ and $v,w \in I_{0,n}$,
        $\sum_{j = 1}^{n_v} \rho^v_{ij} \rho^w_{jk} = \delta_{v,w} \rho^v_{ik}$,
        
        \item for all integers $n$ such that $I_{0,n}$ is nonempty, $\sum_{v \in I_{0,n}} \rho^v_{ij} = \delta_{ij}$,
        
        \item for all $\alpha \in I_1$,
        $\sum_{j=1}^{m_\alpha} \bar{\sigma}^\alpha_{ij} \sigma^\alpha_{jk} = r(\sigma^\alpha_{ik}),$
        
        \item for all $v \in I_0$,
        $\rho^v_{ik} = \sum_{(\alpha,i',k') \in S(v,i,k)} \sum_{j=1}^{m_\alpha} \sigma^\alpha_{i',j} \bar{\sigma}^\alpha_{j,k'},$ where $S(v, i, k) $ is not empty,
        
        \item for all $\alpha \in I_1$, 
        $\sigma^\alpha_{ik} = \sum_{j = 1}^{m_\alpha} \sigma^\alpha_{ij} r(\sigma^\alpha_{jk}) = \sum_{j=1}^{m_\alpha} s(\sigma^\alpha_{ij}) \sigma^\alpha_{jk},$  and
        $\bar{\sigma}^\alpha_{ik} = \sum_{j = 1}^{m_\alpha} \bar{\sigma}^\alpha_{ij} r(\bar{\sigma}^\alpha_{jk}) = \sum_{j=1}^{m_\alpha} s(\bar{\sigma}^\alpha_{ij}) \bar{\sigma}^\alpha_{jk}$.
    \end{enumerate}
    We call $L_\K(\scr{B})$ the Leavitt path algebra of $\scr{B}$ over $\k$.
\end{definition}

Note that the symbols $r, s$ in the above equations are an analogue of the range and source maps of the generators $\{\bar{\sigma}^\alpha_{ij}, \sigma^\alpha_{ij}: \alpha \in I_1, 1 \leq i,j \leq m_\alpha\}$.

\begin{remark}
    In the case where the $C^*$-algebras in $\scr{B}$ are commutative (i.e., $\scr{B}$ is a directed graph), the algebra $L_\K(\scr{B})$ coincides with the Leavitt path algebra of the associated directed graph.
\end{remark}

We'll discuss an alternative way of interpreting the generators and relations present in $L_\K(\scr{B})$. We'll do this with respect to the following snippet of the quantum quiver diagram (\ref{qqdiagram2}):

\begin{equation}
\begin{aligned} 
\begin{tikzpicture}[
vertex/.style={circle, draw=black, ultra thick, minimum size=7mm},
edge/.style={rectangle, minimum size=7mm},
]
\node[edge] (e0) at (0,0) {$\cdots$};
\node[edge] [label = {$\alpha_2$}] (e1) at (2,0) {6};
\node[vertex] [label = {$v_3$}] (v1) at (1, 1) {3};
\node[vertex] [label = {$v_4$}] (v4) at (1,-1) {2};
\node[vertex] [label = {$v_5$}] (v5) at (3,-1) {4};
\draw[-] (v1) to [bend left] (e1);
\draw[-] (v1) to [bend right] (e1);
\draw[->] (e1)--(v4);
\draw[->] (e1)--(v5);
\end{tikzpicture}
\end{aligned}
\end{equation}

We associate to each of the vertices matrices according to their size:
\begin{enumerate}
    \item $v_3 \rightsquigarrow V_3 := \begin{pmatrix}
        \rho^3_{11} & \rho^3_{12} & \rho^3_{13} \\
        \rho^3_{21} & \rho^3_{22} & \rho^3_{23} \\
        \rho^3_{31} & \rho^3_{32} & \rho^3_{33}
    \end{pmatrix}$,
    \item $v_4 \rightsquigarrow V_4 := \begin{pmatrix}
        \rho^4_{11} & \rho^4_{12} \\
        \rho^4_{21} & \rho^4_{22} 
    \end{pmatrix}$, and
    \item $v_5 \rightsquigarrow V_5 := \begin{pmatrix}
        \rho^5_{11} & \rho^5_{12} & \rho^5_{13} & \rho^5_{14} \\
        \rho^5_{21} & \rho^5_{22} & \rho^5_{23} & \rho^5_{24} \\
        \rho^5_{31} & \rho^5_{32} & \rho^5_{33} & \rho^5_{14} \\
        \rho^5_{41} & \rho^5_{42} & \rho^5_{43} & \rho^5_{44}
    \end{pmatrix}$,
\end{enumerate}
and associate to each edge a square matrix and ``conjugate transpose" square matrix according to its size
$$\alpha_2 \rightsquigarrow E_2 := \begin{pmatrix}
    \sigma^2_{11} & \sigma^2_{12} & \sigma^2_{13} & \sigma^2_{14} & \sigma^2_{15} & \sigma^2_{16} \\
    \sigma^2_{21} & \sigma^2_{22} & \sigma^2_{23} & \sigma^2_{24} & \sigma^2_{25} & \sigma^2_{26} \\
    \sigma^2_{31} & \sigma^2_{32} & \sigma^2_{33} & \sigma^2_{34} & \sigma^2_{35} & \sigma^2_{36} \\
    \sigma^2_{41} & \sigma^2_{42} & \sigma^2_{43} & \sigma^2_{44} & \sigma^2_{45} & \sigma^2_{46} \\
    \sigma^2_{51} & \sigma^2_{52} & \sigma^2_{53} & \sigma^2_{54} & \sigma^2_{55} & \sigma^2_{56} \\
    \sigma^2_{61} & \sigma^2_{62} & \sigma^2_{63} & \sigma^2_{64} & \sigma^2_{65} & \sigma^2_{66}
\end{pmatrix}, E_2^* := \begin{pmatrix}
    \bar{\sigma}^2_{11} & \bar{\sigma}^2_{12} & \bar{\sigma}^2_{13} & \bar{\sigma}^2_{14} & \bar{\sigma}^2_{15} & \bar{\sigma}^2_{16} \\
    \bar{\sigma}^2_{21} & \bar{\sigma}^2_{22} & \bar{\sigma}^2_{23} & \bar{\sigma}^2_{24} & \bar{\sigma}^2_{25} & \bar{\sigma}^2_{26} \\
    \bar{\sigma}^2_{31} & \bar{\sigma}^2_{32} & \bar{\sigma}^2_{33} & \bar{\sigma}^2_{34} & \bar{\sigma}^2_{35} & \bar{\sigma}^2_{36} \\
    \bar{\sigma}^2_{41} & \bar{\sigma}^2_{42} & \bar{\sigma}^2_{43} & \bar{\sigma}^2_{44} & \bar{\sigma}^2_{45} & \bar{\sigma}^2_{46} \\
    \bar{\sigma}^2_{51} & \bar{\sigma}^2_{52} & \bar{\sigma}^2_{53} & \bar{\sigma}^2_{54} & \bar{\sigma}^2_{55} & \bar{\sigma}^2_{56} \\
    \bar{\sigma}^2_{61} & \bar{\sigma}^2_{62} & \bar{\sigma}^2_{63} & \bar{\sigma}^2_{64} & \bar{\sigma}^2_{65} & \bar{\sigma}^2_{66}
\end{pmatrix}^T.$$

For square matrices $A,B$ of size $n \times n$ and $m \times m$ respectively, by $A \square B$ we mean the $(n+m) \times (n+m)$-matrix $\begin{pmatrix}
    A & 0 \\
    0 & B
\end{pmatrix}$. The relations of $L_\K(\scr{B})$ become the following
\begin{enumerate}
    \item[1.] $V_i^2 = V_i$ for all $i$,
    \item[3.] $E_2^* E_2 = V_4 \square V_5$,
    \item[4.] $V_3 \square V_3 = E_2 E_2^*$,
    \item[5.] $(V_3 \square V_3) E_2 = E_2 = E_2 (V_4 \square V_5)$ and $(V_4 \square V_5) E_2^* = E_2^* = E_2^* (V_3 \square V_3)$.
\end{enumerate}
Rewriting relation (2) requires global information about $\scr{B}$, but letting $V_i$, $i=1,2$ denote matrices $\{\rho^v_{jk}\}_{1 \leq j,k \leq n_{v_i}}$ in a similar manner to how $V_3,V_4,V_5$ were defined, relation (2) becomes
\begin{enumerate}
    \item $V_1 = (1)$ (the $1 \times 1$ identity matrix),
    \item $V_2 + V_4 = \begin{pmatrix}
        1 & 0 \\ 0 & 1
    \end{pmatrix}$,
    \item $V_3 = \begin{pmatrix}
        1 & 0 & 0 \\ 0 & 1 & 0 \\ 0 & 0 & 1
    \end{pmatrix}$, and
    \item $V_5 = \begin{pmatrix}
        1 & 0 & 0 & 0 \\ 0 & 1 & 0 & 0 \\ 0 & 0 & 1 & 0 \\ 0 & 0 & 0 & 1
    \end{pmatrix}$.
\end{enumerate}

\begin{example}
    A Leavitt algebra $L_\K(n,m)$ for integers $1 < n < m$ is a $\k$-algebra of module type $(n,m)$, meaning $m$ is the least integer $r > n$ such that $L_\K(n,m)^n \cong L_\K(n,m)^r$. A subclass of these algebras are contained in our construction. Specifically if $n,k$ are positive integers with $k > 1$ we may form the following QQ diagram
    \begin{equation}
    \begin{aligned}
    \begin{tikzpicture}[
    vertex/.style={circle, draw=black, ultra thick, minimum size=7mm},
    edge/.style={rectangle, minimum size=7mm},
    ]
    \node[vertex] [label = {$v_n$}] (vn) at (-2,0) {$n$};
    \node[vertex] [label = {$v_{kn}$}] (vkn) at (2,0) {$kn$};
    \node[edge] (e) at (0,0) {$kn$};
    \node at (-1,0) {$\vdots$};
    \draw[transform canvas={yshift=2.5mm},-] (vn)--(e);
    \draw[transform canvas={yshift=-2.5mm},-] (vn)--(e);
    \draw[->] (e)--(vkn);
    \end{tikzpicture}
    \end{aligned}
    \end{equation}
    where $\vdots$ denotes $k$ parallel lines. Then by Theorem \ref{thm_4_2_projective_module_computation} below the corresponding Leavitt path algebra has module type $(kn,k^2 n)$.
\end{example}

\section{Projective modules over Leavitt path algebras of quantum quivers} 

Let $\scr{B} = (B_0,B_1,\r,\s)$ be a quantum graph with matrix blocks in $B_i$ indexed by a finite set $I_i$ for $i=0,1$. In this section we give an explicit computation of the monoid of projective modules over the Leavitt path algebra we've defined for $\scr{B}$. We first put an equivalence relation on $I_0$. If two matrix blocks $C,D$ in $B_0$ have the images $C_0 + C_1 + \dots + C_n$ and $D_0 + D_1 + \dots + D_m$ under $\s$ (the $C_i$'s and $D_i$'s are matrices in matrix blocks in $B_1$), where (without loss of generality) $C_0, D_0$ exist in the same matrix block in $B_1$, we call $C,D$ \emph{source equivalent}. Call $v,w \in I_0$ \emph{source-equivalent} if their corresponding matrix blocks in $B_0$ are source equivalent, and let $\sim_{\s}$ denote the equivalence relation generated by source equivalence.

Moreover, for each matrix block in $B_0$ with index $v$ and matrix block in $B_1$ with index $\alpha$, recall the integers $\order(\s,v,\alpha)$ which essentially tells us how many times $\s$ ``embeds" a copy of block $v$ in the block $\alpha$ (strictly speaking, we should say \emph{block indexed by $v$}, but the sentence reads better this way). For the following proof, we shall refer to the collection of these integers as the \emph{out-degrees} of $v$.

Before, we calculate the monoid of finitely generated projective modules we need to make a note of the generators we have. For each $v \in I_0 $, we have a matrix block in $B_0$ with size $n_v$. Since in the Leavitt path algebra this gives us an $n_v \times n_v$ idempotent matrix we have a generator $P_v$ in the monoid which is a summand of the free module of rank $n_v$. We have a complementary idempotent $I_{n_v} - P_v$ (where $I_{n_v}$ denotes the identity matrix) which, in the same manner as above, corresponds to a projective module $Q_v$ and we have the relation $P_v \oplus Q_v \cong n_v I$ where $I$ is the generator corresponding to the free module of rank one. We call relations of this form \emph{trivial}, since these hold in the monoid of finitely generated projective modules over any ring.

\begin{theorem} \label{thm_4_2_projective_module_computation}
    For each source-equivalent class $[v]$, let $N_{[v]}$ denote the LCM of all nonzero out-degrees of all vertices in $[v]$. Then $L_k(\scr{B})$ has monoid of finitely generated projective modules whose only \emph{nontrivial relation} is
    $$ N_{[v]}\left(\bigoplus_{w \in [v]} P_w\right) \cong N_{[v]} \left(\bigoplus_{e \in E_{[v]}} \bigoplus_{u \in R(e)} P_u\right).$$ 
\end{theorem}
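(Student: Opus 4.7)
My plan is to apply the framework of Bergman \cite{Bergman1974} for computing the monoid of finitely generated projective modules over a ring presented by generators and relations, matching each defining relation of Definition \ref{defn_3_7_LkB} to a corresponding monoid-level operation.

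I first would identify the generators. For each $v \in I_0$, the matrix $V_v = (\rho^v_{ij})_{1 \leq i,j \leq n_v}$ in $M_{n_v}(L_k(\scr{B}))$ is an idempotent whose class is the projective module $P_v$. Relations (1)--(2) realize $\{V_v\}_{v \in I_{0,n}}$ as a partition of the identity of $M_n(L_k(\scr{B}))$, yielding the relations $\bigoplus_{v \in I_{0,n}} P_v \cong n[R]$ for each nonempty $I_{0,n}$. These, together with the automatic complementary-idempotent identities $P_v \oplus Q_v \cong n_v[R]$, exhaust the trivial part of the monoid.

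Next, I would translate the edge relations (3)--(5) into the monoid. Relation (3) gives $[E_\alpha^* E_\alpha] = \sum_u \order(\r, u, \alpha) P_u$ because $E_\alpha^* E_\alpha$ is block-diagonal with range vertex blocks stacked according to the multiplicities $\order(\r, u, \alpha)$. Relation (4) expresses each source-vertex idempotent $V_w$ as a sum, over all edges $\alpha$ carrying $w$ in its source decomposition, of source-block diagonal compressions of $E_\alpha E_\alpha^*$. The Murray--von Neumann equivalence $[E_\alpha E_\alpha^*] = [E_\alpha^* E_\alpha]$, combined with these structural identities, ties source and range vertex sums together in the monoid.

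The core step is to aggregate these per-edge identities across a source-equivalence class $[v]$. The difficulty is that distinct vertices $w \in [v]$ may contribute to distinct edges with distinct multiplicities $\order(\s, w, e)$, so a naive summation does not yield a uniform coefficient on each $P_w$. I would therefore scale the contribution coming from each edge $e$ by the appropriate integer factor so that every $w \in [v]$ appears with exactly $N_{[v]}$ copies on the source side. This is precisely what the LCM $N_{[v]}$ of the nonzero out-degrees in $[v]$ achieves, and the resulting aggregated identity becomes the claimed $N_{[v]} \bigoplus_{w \in [v]} P_w \cong N_{[v]} \bigoplus_{e \in E_{[v]}} \bigoplus_{u \in R(e)} P_u$.

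Finally, to confirm that this is the only nontrivial relation, I would use Bergman's universal realization. The plan is to build the candidate monoid $M$ on generators $\{P_v\}$ with the trivial relations and the LCM-relations only, realize $M$ as $V(A)$ for some $k$-algebra $A$ via the explicit constructions of \cite{Bergman1974}, and exhibit a free quantum Cuntz--Krieger $\scr{B}$-family inside $A$. The universal property of $L_k(\scr{B})$ then produces a $k$-algebra map $L_k(\scr{B}) \to A$ inducing a monoid map $V(L_k(\scr{B})) \to M$; the other direction follows from the fact that every relation forced in $V(L_k(\scr{B}))$ has already been accounted for. The main obstacle I anticipate is the aggregation step: verifying both that $N_{[v]}$ is sufficient and that no smaller multiplier works requires careful bookkeeping of how source-equivalence classes overlap through shared edge blocks. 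A secondary obstacle is checking compatibility of Bergman's explicit construction with the precise matrix form of the relations in Definition \ref{defn_3_7_LkB}.
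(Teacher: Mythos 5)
Your proposal is correct and follows essentially the same route as the paper: the displayed relation is obtained as an explicit Murray--von Neumann equivalence built from the Cuntz--Krieger relations after scaling by the LCM $N_{[v]}$ (the paper realizes your ``aggregation'' step concretely as a pair of rectangular matrices $A,B$ whose rows are indexed by $N_{[v]}$ copies of each vertex block of $[v]$, with $AB$ and $BA$ the two sides of the relation), and the absence of further nontrivial relations is, in both your plan and the paper, deferred to the universal constructions of Bergman's \S 5. The only stylistic difference is that you phrase the completeness step as building a model algebra $A$ with $V(A)=M$ and mapping $L_k(\scr{B})\to A$ via a Cuntz--Krieger family, whereas the paper argues directly that the matrix-form relations fit Bergman's framework; both rest on the same machinery and leave the same details to the reader.
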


\begin{proof}

Throughout, let $q$ denote $N_{[v]}$ (for shorthand). It suffices to find a matrices $A,B$ (with entries in $L_k(\scr{B})$) such that $AB = \bigoplus_{w \in[v]} [\rho^w]^{\oplus q} $, $BA = \bigoplus_{e \in E_{[v]}} \bigoplus_{w \in R(e)} [\rho^w]^{\oplus q} $, $ABA = A$ and $BAB = B$. We let $A$ denote a matrix with rows indexed by the set $[v]_E^q$ (i.e., $q$ copies of each vertex $w$ in the equivalence class $[v]_E$). In the row indexed by $w$, we let $A$ have nonzero entries $\sigma^\alpha_{ij}$ such that the sum of the row is $\sum_{(\alpha,i) \in S(w)} \sum_{j=1}^{m_\alpha} \sigma_{ij}^\alpha$. We align these nonzero entries such that columns sum to $\sum_{i=1}^{m_\alpha} \sigma_{ij}^\alpha$ for some fixed $\alpha, j$ (i.e., if we leave a column fixed, $j, \alpha$ don't vary, and there are precisely $m_\alpha$ nonzero entries in the aforementioned column, given by $\sigma^\alpha_{ij}$ for all $1 \leq i \leq m_\alpha$). We now let $B$ denote the involution tranpose of $A$, and it is easily checked that $A,B$ satisfy the required conditions.

To see that there are no further nontrivial relations, we note that writing the relations of $L_\k(\scr{B})$ in matrix form, similar to the section after Definition \ref{defn_3_7_LkB}, and considering the algebra generated by the matrices with formal addition and multiplication as determined by the matrix relations, no new projective modules or relations are added, and so with the constructions and ideas in \cite[\S 5]{Bergman1974} we conclude there are no further nontrivial relations beyond what was proved above.
\end{proof}

\begin{remark}
    We need the matrices $A$ and $B$ above to have rows and columns indexed by $N_{[v]}$ copies of the involved vertices in order for us to align the nonzero entries in the required way.
\end{remark}

The reader should note in the case where $B_0,B_1$ are commutative (i.e., the case of a directed graph) we recover the graph monoid as the monoid of finitely generated projective modules over the Leavitt path algebra.

In the sequel we illustrate the above theorem with an example. Note that while projective modules can still be computed relatively easily, the computations can become a unwieldy, 

\begin{example}
    We return to the example of Diagram (\ref{qqdiagram2}), which we include here for convenience of the reader. Let $\scr{B}$ be a regular Leavitt quantum graph associated to the following diagram.
    \begin{equation}
    \begin{aligned}
    \begin{tikzpicture}[
    vertex/.style={circle, draw=black, ultra thick, minimum size=7mm},
    edge/.style={rectangle, minimum size=7mm},
    ]
    \node[edge] [label = {$\alpha_2$}] (e0) at (0,0) {4};
    \node[edge] [label = {$\alpha_1$}] (e1) at (2,0) {6};
    \node[vertex] [label = {$v_1$}] (v0) at (-1,1) {1};
    \node[vertex] [label = {$v_2$}] (v1) at (1,1) {3};
    \node[vertex] [label = {$v_3$}] (v3) at (-1,-1) {2};
    \node[vertex] [label = {$v_4$}] (v4) at (1,-1) {2};
    \node[vertex] [label = {$v_5$}] (v5) at (3,-1) {4};
    \draw[-] (v0)--(e0);
    \draw[-] (v1)--(e0);
    \draw[-] (v1) to [bend left] (e1);
    \draw[-] (v1) to [bend right] (e1);
    \draw[->] (e0)--(v3);
    \draw[->] (e0)--(v4);
    \draw[->] (e1)--(v4);
    \draw[->] (e1)--(v5);
    \end{tikzpicture}
\end{aligned}
    \end{equation}
    We may choose a regular representation of this quiver by defining $\s: B_0 \to B_1$ by
    $$\begin{cases}
        e^{v_1} & \mapsto e^{\alpha_2}_{11} \\
        e^{v_2}_{ij} & \mapsto e^{\alpha_1}_{i+1,j+1} + e^{\alpha_2}_{ij} + e^{\alpha_2}_{i+3,j+3} \\
        e^{v_3}_{ij}, e^{v_4}_{ij}, e^{v_3}_{ij} & \mapsto 0
    \end{cases}$$
    and $\r: B_0 \to B_1$ by
    $$\begin{cases}
        e^{v_1},e^{v_2}_{ij} & \mapsto 0 \\
        e^{v_3}_{ij} & \mapsto e^{\alpha_2}_{ij} \\
        e^{v_4}_{ij} & \mapsto e^{\alpha_1}_{ij} + e^{\alpha_2}_{i+2,j+2} \\
        e^{v_5}_{ij} & \mapsto e^{\alpha_1}_{i+2,j+2}.
    \end{cases}$$
    Fixing a field $\K$, we may trace through the definition of $L_\K(\scr{B})$ to find that $\rho^v_{ij} = \delta_{ij}$ for $v=v_1,v_2,v_5$, while
    $$e_1 := \begin{pmatrix} \rho^{v_3}_{11} & \rho^{v_3}_{12} \\ \rho^{v_3}_{21} & \rho^{v_3}_{22} \end{pmatrix}, e_2 := \begin{pmatrix} \rho^{v_4}_{11} & \rho^{v_4}_{12} \\ \rho^{v_4}_{21} & \rho^{v_4}_{22} \end{pmatrix} \in M_2(L_\K(\scr{B})) $$
    are orthogonal idempotents. Hence, the monoid of projective modules $\cal{P} = \cal{P}(L_\K(\scr{B}))$ is generated by $I$ the free module of rank 1, and two generators $P,Q$ (each corresponding to the images of the matrices above respectively) such that $P + Q = 2I$. The one other relation we have on $\cal{P}$ is given by $8I = 2P + 4Q + 8I = 12I + 2Q$, since we have the following relations between matrices:
    \begin{enumerate}
        \item $C_1 := e_1 \oplus e_2 \oplus e_1 \oplus e_2 \oplus I_4 \oplus e_2 \oplus I_4 \oplus e_2 = AB$,
        \item $C_2 := I_1 \oplus I_3 \oplus I_1 \oplus I_3 = I_8 = BA$,
        \item $C_1 A = A C_2 = A$, and
        \item $C_2 B = B C_1 = B$,
    \end{enumerate}
    where $e_1 \oplus ...$ denotes the usual direct sum of matrices, and $I_n$ denotes the $n \times n$ identity matrix, and $A,B$ are the martices:

 \vspace{2mm}
    
    \scalebox{0.80}{$B=\begin{pmatrix}
        \sigma^{\alpha_2}_{11} & \sigma^{\alpha_2}_{12} & \sigma^{\alpha_2}_{13} & \sigma^{\alpha_2}_{14} & 0 & 0 & 0 & 0 & 0 & 0 & 0 & 0 & 0 & 0 & 0 & 0 & 0 & 0 & 0 & 0 \\
        
        \sigma^{\alpha_2}_{21} & \sigma^{\alpha_2}_{22} & \sigma^{\alpha_2}_{23} & \sigma^{\alpha_2}_{24} & 0 & 0 & 0 & 0 & \sigma^{\alpha_1}_{11} & \sigma^{\alpha_1}_{12} & \sigma^{\alpha_1}_{13} & \sigma^{\alpha_1}_{14} & \sigma^{\alpha_1}_{15} & \sigma^{\alpha_1}_{16} & \sigma^{\alpha_1}_{41} & \sigma^{\alpha_1}_{42} & \sigma^{\alpha_1}_{43} & \sigma^{\alpha_1}_{44} & \sigma^{\alpha_1}_{45} & \sigma^{\alpha_1}_{46} \\
        
        \sigma^{\alpha_2}_{31} & \sigma^{\alpha_2}_{32} & \sigma^{\alpha_2}_{33} & \sigma^{\alpha_2}_{34} & 0 & 0 & 0 & 0 & \sigma^{\alpha_1}_{21} & \sigma^{\alpha_1}_{22} & \sigma^{\alpha_1}_{23} & \sigma^{\alpha_1}_{24} & \sigma^{\alpha_1}_{25} & \sigma^{\alpha_1}_{26} & \sigma^{\alpha_1}_{51} & \sigma^{\alpha_1}_{52} & \sigma^{\alpha_1}_{53} & \sigma^{\alpha_1}_{54} & \sigma^{\alpha_1}_{55} & \sigma^{\alpha_1}_{56} \\
        
        \sigma^{\alpha_2}_{41} & \sigma^{\alpha_2}_{42} & \sigma^{\alpha_2}_{43} & \sigma^{\alpha_2}_{44} & 0 & 0 & 0 & 0 & \sigma^{\alpha_1}_{31} & \sigma^{\alpha_1}_{32} & \sigma^{\alpha_1}_{33} & \sigma^{\alpha_1}_{34} & \sigma^{\alpha_1}_{35} & \sigma^{\alpha_1}_{36} & \sigma^{\alpha_1}_{61} & \sigma^{\alpha_1}_{62} & \sigma^{\alpha_1}_{63} & \sigma^{\alpha_1}_{64} & \sigma^{\alpha_1}_{65} & \sigma^{\alpha_1}_{66} \\
        
        0 & 0 & 0 & 0 & \sigma^{\alpha_2}_{11} & \sigma^{\alpha_2}_{12} & \sigma^{\alpha_2}_{13} & \sigma^{\alpha_2}_{14} & 0 & 0 & 0 & 0 & 0 & 0 & 0 & 0 & 0 & 0 & 0 & 0 \\
        
        0 & 0 & 0 & 0 & \sigma^{\alpha_2}_{21} & \sigma^{\alpha_2}_{22} & \sigma^{\alpha_2}_{23} & \sigma^{\alpha_2}_{24} & \sigma^{\alpha_1}_{41} & \sigma^{\alpha_1}_{42} & \sigma^{\alpha_1}_{43} & \sigma^{\alpha_1}_{44} & \sigma^{\alpha_1}_{45} & \sigma^{\alpha_1}_{46} & \sigma^{\alpha_1}_{11} & \sigma^{\alpha_1}_{12} & \sigma^{\alpha_1}_{13} & \sigma^{\alpha_1}_{14} & \sigma^{\alpha_1}_{15} & \sigma^{\alpha_1}_{16} \\
        
        0 & 0 & 0 & 0 & \sigma^{\alpha_2}_{31} & \sigma^{\alpha_2}_{32} & \sigma^{\alpha_2}_{33} & \sigma^{\alpha_2}_{34} & \sigma^{\alpha_1}_{51} & \sigma^{\alpha_1}_{52} & \sigma^{\alpha_1}_{53} & \sigma^{\alpha_1}_{54} & \sigma^{\alpha_1}_{55} & \sigma^{\alpha_1}_{56} & \sigma^{\alpha_1}_{21} & \sigma^{\alpha_1}_{22} & \sigma^{\alpha_1}_{23} & \sigma^{\alpha_1}_{24} & \sigma^{\alpha_1}_{25} & \sigma^{\alpha_1}_{26} \\
        
        0 & 0 & 0 & 0 & \sigma^{\alpha_2}_{41} & \sigma^{\alpha_2}_{42} & \sigma^{\alpha_2}_{43} & \sigma^{\alpha_2}_{44} & \sigma^{\alpha_1}_{61} & \sigma^{\alpha_1}_{62} & \sigma^{\alpha_1}_{63} & \sigma^{\alpha_1}_{64} & \sigma^{\alpha_1}_{65} & \sigma^{\alpha_1}_{66} & \sigma^{\alpha_1}_{31} & \sigma^{\alpha_1}_{32} & \sigma^{\alpha_1}_{33} & \sigma^{\alpha_1}_{34} & \sigma^{\alpha_1}_{35} & \sigma^{\alpha_1}_{36} \\
        
    \end{pmatrix},$}

\vspace{2mm}
    
    $$A = \begin{pmatrix}
        \bar{\sigma}^{\alpha_2}_{11} & \bar{\sigma}^{\alpha_2}_{12} & \bar{\sigma}^{\alpha_2}_{13} & \bar{\sigma}^{\alpha_2}_{14} & 0 & 0 & 0 & 0 \\
        \bar{\sigma}^{\alpha_2}_{21} & \bar{\sigma}^{\alpha_2}_{22} & \bar{\sigma}^{\alpha_2}_{23} & \bar{\sigma}^{\alpha_2}_{24} & 0 & 0 & 0 & 0 \\ \bar{\sigma}^{\alpha_2}_{31} & \bar{\sigma}^{\alpha_2}_{32} & \bar{\sigma}^{\alpha_2}_{33} & \bar{\sigma}^{\alpha_2}_{34} & 0 & 0 & 0 & 0 \\ \bar{\sigma}^{\alpha_2}_{41} & \bar{\sigma}^{\alpha_2}_{42} & \bar{\sigma}^{\alpha_2}_{43} & \bar{\sigma}^{\alpha_2}_{44} & 0 & 0 & 0 & 0 \\
        0 & 0 & 0 & 0 & \bar{\sigma}^{\alpha_2}_{11} & \bar{\sigma}^{\alpha_2}_{12} & \bar{\sigma}^{\alpha_2}_{13} & \bar{\sigma}^{\alpha_2}_{14} \\
        0 & 0 & 0 & 0 & \bar{\sigma}^{\alpha_2}_{21} & \bar{\sigma}^{\alpha_2}_{22} & \bar{\sigma}^{\alpha_2}_{23} & \bar{\sigma}^{\alpha_2}_{24} \\
        0 & 0 & 0 & 0 & \bar{\sigma}^{\alpha_2}_{31} & \bar{\sigma}^{\alpha_2}_{32} & \bar{\sigma}^{\alpha_2}_{33} & \bar{\sigma}^{\alpha_2}_{34} \\
        0 & 0 & 0 & 0 & \bar{\sigma}^{\alpha_2}_{41} & \bar{\sigma}^{\alpha_2}_{42} & \bar{\sigma}^{\alpha_2}_{43} & \bar{\sigma}^{\alpha_2}_{44} \\
        0 & \bar{\sigma}^{\alpha_1}_{11} & \bar{\sigma}^{\alpha_1}_{12} & \bar{\sigma}^{\alpha_1}_{13} & 0 & \bar{\sigma}^{\alpha_1}_{14} & \bar{\sigma}^{\alpha_1}_{15} & \bar{\sigma}^{\alpha_1}_{16} \\
        0 & \bar{\sigma}^{\alpha_1}_{21} & \bar{\sigma}^{\alpha_1}_{22} & \bar{\sigma}^{\alpha_1}_{23} & 0 & \bar{\sigma}^{\alpha_1}_{24} & \bar{\sigma}^{\alpha_1}_{25} & \bar{\sigma}^{\alpha_1}_{26} \\
        0 & \bar{\sigma}^{\alpha_1}_{31} & \bar{\sigma}^{\alpha_1}_{32} & \bar{\sigma}^{\alpha_1}_{33} & 0 & \bar{\sigma}^{\alpha_1}_{34} & \bar{\sigma}^{\alpha_1}_{35} & \bar{\sigma}^{\alpha_1}_{36} \\
        0 & \bar{\sigma}^{\alpha_1}_{41} & \bar{\sigma}^{\alpha_1}_{42} & \bar{\sigma}^{\alpha_1}_{43} & 0 & \bar{\sigma}^{\alpha_1}_{44} & \bar{\sigma}^{\alpha_1}_{45} & \bar{\sigma}^{\alpha_1}_{46} \\
        0 & \bar{\sigma}^{\alpha_1}_{51} & \bar{\sigma}^{\alpha_1}_{52} & \bar{\sigma}^{\alpha_1}_{53} & 0 & \bar{\sigma}^{\alpha_1}_{54} & \bar{\sigma}^{\alpha_1}_{55} & \bar{\sigma}^{\alpha_1}_{56} \\
        0 & \bar{\sigma}^{\alpha_1}_{61} & \bar{\sigma}^{\alpha_1}_{62} & \bar{\sigma}^{\alpha_1}_{63} & 0 & \bar{\sigma}^{\alpha_1}_{64} & \bar{\sigma}^{\alpha_1}_{65} & \bar{\sigma}^{\alpha_1}_{66} \\
        0 & \bar{\sigma}^{\alpha_1}_{14} & \bar{\sigma}^{\alpha_1}_{15} & \bar{\sigma}^{\alpha_1}_{16} & 0 & \bar{\sigma}^{\alpha_1}_{11} & \bar{\sigma}^{\alpha_1}_{12} & \bar{\sigma}^{\alpha_1}_{13} \\
        0 & \bar{\sigma}^{\alpha_1}_{24} & \bar{\sigma}^{\alpha_1}_{25} & \bar{\sigma}^{\alpha_1}_{26} & 0 & \bar{\sigma}^{\alpha_1}_{21} & \bar{\sigma}^{\alpha_1}_{22} & \bar{\sigma}^{\alpha_1}_{23} \\
        0 & \bar{\sigma}^{\alpha_1}_{34} & \bar{\sigma}^{\alpha_1}_{35} & \bar{\sigma}^{\alpha_1}_{36} & 0 & \bar{\sigma}^{\alpha_1}_{31} & \bar{\sigma}^{\alpha_1}_{32} & \bar{\sigma}^{\alpha_1}_{33} \\
        0 & \bar{\sigma}^{\alpha_1}_{44} & \bar{\sigma}^{\alpha_1}_{45} & \bar{\sigma}^{\alpha_1}_{46} & 0 & \bar{\sigma}^{\alpha_1}_{41} & \bar{\sigma}^{\alpha_1}_{42} & \bar{\sigma}^{\alpha_1}_{43} \\
        0 & \bar{\sigma}^{\alpha_1}_{54} & \bar{\sigma}^{\alpha_1}_{55} & \bar{\sigma}^{\alpha_1}_{56} & 0 & \bar{\sigma}^{\alpha_1}_{51} & \bar{\sigma}^{\alpha_1}_{52} & \bar{\sigma}^{\alpha_1}_{53} \\
        0 & \bar{\sigma}^{\alpha_1}_{64} & \bar{\sigma}^{\alpha_1}_{65} & \bar{\sigma}^{\alpha_1}_{66} & 0 & \bar{\sigma}^{\alpha_1}_{61} & \bar{\sigma}^{\alpha_1}_{62} & \bar{\sigma}^{\alpha_1}_{63} \\
    \end{pmatrix}.$$

\end{example}

\section*{Acknowledgement}
The authors would like to thank Prof. Roozbeh Hazrat (University of Western Sydney) for his insightful contributions during the early stages of this study. They would also like to thank Afilgen Sebandal (Research Center for Theoretical Physics, Jagna, Bohol) for her thorough review of the manuscript. Finally, the second author would like to thank his supervisor Ardeline Mary Buhphang (North-Eastern Hill University) for her willingness to discuss the material presented in this manuscript.

The first author was partially supported by the Australian Government RTP scholarship.

\newpage 
\section*{Declarations}
\subsection*{Ethical approval}
Not applicable

\subsection*{Competing interests}
Not applicable

\subsection*{Authors' contribution}
All authors have contributed equally to all sections.

\subsection*{Funding}
Not applicable

\subsection*{Availability of data and materials}
Not applicable

\printbibliography
\end{document}